\numberwithin{equation}{section}
\newtheorem{lemma}{Lemma}
\newtheorem*{lemma*}{Lemma}
\newtheorem{proposition}{Proposition}
\newtheorem{theorem}{Theorem}
\newtheorem{definition}{Definition}
\newtheorem*{remark}{Remark}
\begin{document}

\title{Well-posedness of an evaporation model\\
for a spherical droplet exposed to an air flow}

\author{Eberhard B\"ansch\textsuperscript{a}, Martin Do{\ss}\textsuperscript{a,}\footnote{Correspondence:  \href{mailto:martin.md.doss@fau.de}{martin.md.doss@fau.de}}~\,\orcidlink{0000-0002-5842-6376}, Carsten Gr{\"a}ser\textsuperscript{a}\,\orcidlink{0000-0003-4855-8655}, and Nadja Ray\textsuperscript{a,b}\,\orcidlink{0000-0002-9596-953X}\vspace{10pt}\\
\normalsize \textsuperscript{a}Department of Mathematics, Friedrich--Alexander University\\
\normalsize Erlangen--N{\"u}rnberg, Erlangen, Germany\\
\normalsize \textsuperscript{b}Mathematical Institute for Machine Learning and Data Science, Catholic University\\
\normalsize Eichst{\"a}tt--Ingolstadt, Ingolstadt, Germany}

\date{}

\maketitle 

\begin{abstract}
    In this paper, we address the well-posedness of an evaporation model for a spherical liquid droplet taking into account the convective impact of an air flow in the ambient gas phase. From a mathematical perspective, we are dealing with a coupled ODE--PDE system for the droplet radius, the temperature distribution, and the vapor concentration.  The nonlinear coupling arises from the evaporation rate modeled by the Hertz--Knudsen equation. Under physically meaningful assumptions, we prove existence and uniqueness of a weak solution until the droplet has evaporated completely. Numerical simulations are performed to illustrate how different air flows affect the evaporation process.\vspace{10pt}\\
    \textbf{\textit{Keywords}:} single droplet evaporation, Hertz--Knudsen equation, nonlinear boundary condition, coupled ODE--PDE system, direct numerical simulation.\vspace{10pt}\\
    \textbf{\textit{MSC2020:}} 35Q79 (primary); 35A01, 35K55, 80A19, 80M10 (secondary). 
\end{abstract}

\section{Introduction} \label{sec1}

Evaporation plays an important role in a variety of practical applications ranging from weather forecasting~\cite{wang12} to the production of pharmaceutical powders\cite{vehring08}. Especially when convective currents complicate the temperature and vapor mass distributions, it is important to understand their impact on the evaporation process. Mathematical models can be powerful tools to unravel the complex physics behind convective evaporation provided that the heat and mass fluxes across the liquid--gas interface are captured correctly. Depending on the characteristic length and time scales, there are two different modeling approaches: If the volatile liquid evaporates slowly, one usually assumes thermodynamic equilibrium in the sense that the saturated equals the actual vapor pressure at the liquid--gas interface. In general, however, the saturated and the actual vapor pressure do not coincide and their difference determines the evaporation rate. Taking into account this imbalance, non-equilibrium evaporation models are physically more accurate. Especially the Hertz--Knudsen equation is widely used to model and simulate evaporation \cite{holyst15}. As a nonlinear boundary condition, it couples the heat and mass transfer within and around the volatile liquid. Among all possible applications, especially the evaporation of small droplets was found to be well described by the Hertz--Knudsen equation \cite{zientara13}. Our aim is to study the mathematical well-posedness of the related evaporation model for a single droplet taking into account the convective impact of an air flow in the ambient gas phase.\\
The following literature is related to our problem. First of all, the quasi-stationary evaporation of a spherical droplet into stagnant air is well described analytically by the $d^2$-law. As suggested by its name, the $d^2$-law states that the squared diameter (or radius) of the droplet decreases linearly over time \cite{law06}. The same behavior follows from the model in \cite{anderson07} which also accounts for non-equilibrium effects. From a mathematical perspective, single droplet evaporation (and condensation) can be regarded as a generalized Stefan problem \cite{friedman60}. The latter is widely known to be well-posed in one and more spatial dimensions \cite{friedman76, rubinstein71, visintin08, escher03}. The main difference between single droplet evaporation and the classical Stefan problem arises from the fact that the saturated vapor pressure depends on the temperature at the droplet surface which is neglected in \cite{friedman60}. The authors of \cite{pruess11, pruess12} consider a more general model for two-phase flow with phase transition derived from the balances of mass, momentum, and energy in a thermodynamically consistent way. Their existence result applies to nearly flat interfaces and small initial data. Finally, also the cell problems whose well-posedness is studied in \cite{gahn23} are closely related to our topic despite their application being different. Since the upscaled model in \cite{gahn23} describes mineral dissolution and precipitation in a porous medium, each cell represents a single spherical grain whose radius is governed by a nonlinear ordinary differential equation. More general cell geometries are studied in \cite{gaerttner23} using a level-set approach to prove the stability of the corresponding diffusion and permeability tensors. Even if the interface conditions for mineral dissolution and droplet evaporation appear to be similar, it should be pointed out that the latter are generally more complicated. While dissolution can be regarded as an isothermal process, evaporation causes a discontinuity of the normal heat flux which is commonly known as evaporative cooling. Therefore, the evaporation rate determines both the mass and the heat flux across the liquid--gas interface.\\
Our aim is to study the mathematical well-posedness of a non-equilibrium evaporation model for a single spherical droplet which is exposd to an air flow in the ambient gas phase. The considered evaporation model consists of an ordinary differential equation for the droplet radius and two convection--diffusion equations for the temperature and vapor mass distributions. The mathematical challenges of the resulting ODE--PDE system mainly arise from its nonlinear coupling at the droplet surface and the evolution of the latter as a free boundary. More precisely, the evaporation rate is computed from the thermodynamic non-equilibrium between the saturated and the actual vapor pressure at the droplet surface via the Hertz--Knudsen equation. To handle the resulting nonlinear boundary condition for the heat and vapor mass flux, we apply the method of upper and lower solutions. Regarding the radius of the droplet as its time-dependent characteristic length, we resale our model accordingly to immobilize the free phase boundary. By that, the interface velocity can be treated as an additional flux term in the transport equations for the temperature and vapor mass distributions. Finally, we apply Banach's fixed-point theorem to prove the existence of a unique weak solution.\\
The paper is structured as follows. In Section \ref{sec_model}, we introduce our mathematical model for the convective evaporation of a single droplet. In Section \ref{sec_well_posedness}, the latter is shown to admit a unique maximal weak solution as outlined above. The numerical simulations performed in Section \ref{sec_numerical_examples} illustrate the convective impact of the ambient air flow on the evaporation rate. Finally, Section \ref{sec_conclusion} concludes our paper.

\section{Droplet evaporation model} \label{sec_model}

In this section, we present our mathematical model for the convective evaporation of a single spherical droplet exposed to an air flow in the ambient gas phase. As illustrated in Figure \ref{fig:illustration}, 
\begin{figure}
    \centering
    \includegraphics[width=0.7\textwidth]{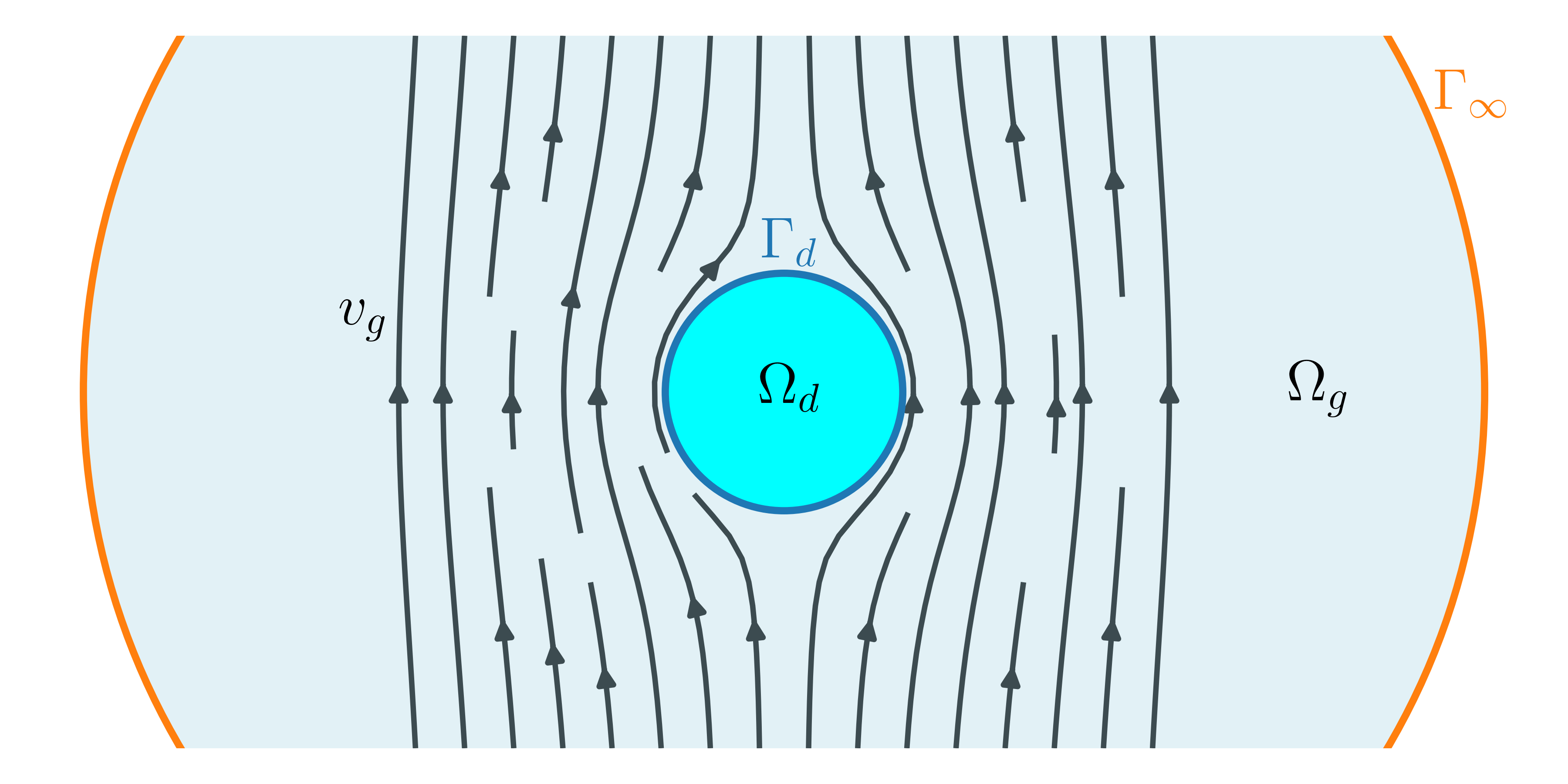}
    \caption{Schematic illustration of the evaporating droplet exposed to an air flow.}
    \label{fig:illustration}
\end{figure}
the droplet surface is denoted by $\Gamma_d \coloneqq \partial \Omega_d$ while $\Omega_d \subset \mathbb{R}^3$ represents its liquid interior. The total droplet volume $V_d \coloneqq |\Omega_d|$ is then governed by the ordinary differential equation
\begin{equation} \label{droplet_volume_evolution}
    \rho_d \frac{d V_d}{dt} = - \int_{\Gamma_d} J \, d\sigma
\end{equation}
with the mass density $\rho_d$ of the volatile liquid and the evaporation rate $J$ per surface area. The ambient gas phase $\Omega_g$ is usually not stagnant. In many practical applications, either natural or forced convection can have a significant impact on the evaporation process. Let $v_g$ be the corresponding velocity field and $n$ the outer unit normal of the gas phase. The heat and vapor mass transport is then governed by the following system of convection--diffusion equations
\begin{align}
    \rho_d C_{p,d}\partial_t T_d &= \nabla \cdot (k_d \nabla T_d) &&\text{in } \Omega_d,\\
    \rho_g C_{p,g}\partial_t T_g &= \nabla \cdot (k_g \nabla T_g - \rho_g C_{p,g} T_g v_g) &&\text{in } \Omega_g,\\
    \partial_t \rho_v &= \nabla \cdot (D_v \nabla \rho_v - \rho_v v_g) &&\text{in } \Omega_g
\end{align}
subject to the interface flux conditions
\begin{align}
    (k_g \nabla T_g - k_d \nabla T_d) \cdot n &= - \Lambda J &&\text{on } \Gamma_d,\\
    D_v \nabla \rho_v \cdot n &= J &&\text{on } \Gamma_d
\end{align}
where $\rho_g$ denotes the total mass density of the gas phase, $\rho_v$ the vapor mass density, $T_{d/g}$ the droplet/gas temperature, $C_{p,d/g}$ the respective specific heat capacity, $k_{d/g}$ the respective thermal conductivity, $D_v$ the vapor diffusion coefficient, and $\Lambda$ the specific latent heat of vaporization.\\
The evaporation rate $J$ is modeled by the Hertz--Knudsen equation. It relates $J$ to the thermodynamic non-equilibrium between the saturated and the actual vapor pressure at the liquid--gas interface. In its original formulation, the Hertz--Knudsen equation reads as follows \cite{holyst15}
\begin{equation} \label{Hertz_Kndusen}
    J = \sqrt{\frac{M_d}{2 \pi \mathcal{R} T_d}} (p_\textit{sat} - p_v)
\end{equation}
with the molar mass $M_d$ of the volatile liquid, the ideal gas constant $\mathcal{R}$, the droplet temperature $T_d$, the saturated vapor pressure $p_\textit{sat}$, and the actual vapor pressure $p_v$ at the liquid--gas interface. It should be mentioned that $p_v$ and $\rho_v$ are related to each other via the ideal gas law $p_v M_d = \rho_v \mathcal{R} T_g$. The saturated vapor pressure, on the other hand, is determined by the interface temperature via the Clausius--Clapeyron relation. In case of water, the latter is well approximated by the Tetens equation \cite{murray67}
\begin{equation} \label{Tetens_equation}
	\frac{p_\textit{sat}}{610.78\,\text{Pa}} = \exp \left( \frac{17.27\,T_g}{T_g + 237.3\text{\,\textdegree C}} \right)
\end{equation}
with $T_g$ in degree Celsius. The continuity of temperature naturally implies $T_d = T_g$ on $\Gamma_d$ which serves as an additional boundary condition. Far away from the droplet, the temperature and vapor mass distributions are supposed to be consistent with the drying conditions. Their radial limits are given by
\begin{equation}
    \lim_{|x| \to \infty} T_g(t,x) = T_\infty \text{\qquad and \qquad} \lim_{|x| \to \infty} \rho_v(t,x) = \rho_\infty
\end{equation}
where $T_\infty$ denotes the temperature and $\rho_\infty$ the vapor mass density of the drying air. It should be pointed out that $T_\infty$ and $\rho_\infty$ are both regarded as constants.\\
The presented evaporation model is highly relevant for a plethora of practical applications such as spray drying or air conditioning \cite{semenov11}. However, to the best of our knowledge, its mathematical well-posedness has not yet been verified in the existing literature. In this paper, we show existence and uniqueness of a weak solution under the following assumptions and simplifications:
\begin{itemize}
    \item[(A1)] The droplet remains spherical throughout its evaporation process. Its radius $R_d$ is governed by
    \begin{equation}
        \rho_d \frac{d R_d}{dt} = - \frac{1}{4 \pi R_d^2} \int_{\Gamma_d} J \, d\sigma
    \end{equation}
    which follows from \eqref{droplet_volume_evolution} by inserting the formula $V_d = 4/3 \pi R_d^3$ for the droplet volume.
    \item[(A2)] As illustrated in Figure \ref{fig:illustration}, the ambient gas phase $\Omega_g$ is truncated far away from the droplet such that its rescaled domain $\Omega_g^* \coloneqq \Omega_g / R_d$ becomes a spherical shell with fixed boundaries.
    \item[(A3)] The drying conditions are sufficiently moderate such that
    \begin{equation}
        |k_d \nabla T_d \cdot n| \ll |k_g \nabla T_g \cdot n| \text{\qquad on } \Gamma_d
    \end{equation} 
    allows us to neglect thermal variations inside the droplet. In other words, only the heat and mass transfer in the gas phase is assumed to determine the evaporation rate.
    \item[(A4)] The Hertz--Knudsen equation \eqref{Hertz_Kndusen} for the evaporation rate is approximated as follows
    \begin{equation} \label{Hertz_Knudsen_approx}
        J = \sqrt{\frac{\mathcal{R} T_g}{2 \pi M_d}} (\rho_\textit{sat} - \rho_v) \approx C (\rho_\textit{sat} - \rho_v)
    \end{equation}
    neglecting the temperature dependence of $C > 0$ and simply regarding it as a positive constant. Motivated by the Tetens equation \eqref{Tetens_equation}, the saturated vapor mass density $\rho_\textit{sat} \coloneqq p_\textit{sat} M_w / (\mathcal{R} T_g)$ is assumed to admit a continuous derivative with respect to the gas temperature satisfying
    \begin{equation}
        0 \leq \frac{d \rho_\textit{sat}}{d T_g} \leq L 
    \end{equation}
    for some Lipschitz constant $L > 0$.
    \item[(A5)] The drying conditions satisfy $\rho_\infty \leq \rho_\textit{sat} (T_\infty) \eqqcolon \rho_*$ which guarantees evaporation instead of condensation. Moreover, there exists a temperature $T_* \leq T_\infty$ such that the corresponding saturated vapor mass density equals $\rho_\textit{sat} (T_*) = \rho_\infty$.
    \item[(A6)] The air flow is described by a quasi-stationary incompressible velocity field $v_g \in H^1(\Omega_g, \mathbb{R}^3)$ satisfying $\nabla \cdot v_g = 0$ in $\Omega_g$ and $v_g \cdot n = 0$ on $\Gamma_d$. It depends continuously on the droplet radius in the following sense: For each $\eta > 0$ there exists a Lipschitz constant $L_\eta > 0$ such that 
    \begin{equation} \label{air_flow_continuity}
        \sup_{x\,\in\,\Omega_g^*} \big| v_g^1 (x R_d^1) - v_g^2 (x R_d^2) \big| \leq L_\eta \big| R_d^1 - R_d^2 \big|
    \end{equation}
    holds for all droplet radii $R_d^1, R_d^2 > \eta$ where $v_g^1$ and $v_g^2$ denote the corresponding velocity fields for the air flow in the ambient gas phase. In other words, the rescaled velocity field $v_g^*$ defined below is uniquely determined by the droplet radius $R_d$ and \eqref{air_flow_continuity} guarantees the Lipschitz continuity of the mapping $R_d \mapsto v_g^* \in L^\infty (\Omega_g^*; \mathbb{R}^3)$ if the droplet radius $R_d$ is bounded away from zero.
    \item[(A7)] Without loss of generality, all physical constants are assumed to be one for the ease of presentation. However, keep in mind that $T_g$ and $\rho_v$ do not satisfy the same convection--diffusion equation in the sense that their coefficients are different!
\end{itemize}
Regarding the radius $R_d$ of the droplet as its time-dependent characteristic length, we rescale our model accordingly. Define $x^* \coloneqq x / R_d \in \Omega_g^*$ for all spatial coordinates $x \in \Omega_g$. The time derivatives of the gas temperature $T_g$ and its rescaled counterpart $T_g^* (t, x^*) \coloneqq T_g (t, x)$ are then related to each other via
\begin{equation} \label{rescaled_time_derivative}
    \partial_t T_g (t, x) = \partial_t T_g^* (t, x^*) + \nabla^* T_g^* (t, x^*) \cdot \partial_t x^* = \partial_t T_g^* (t, x^*) - \frac{\dot{R}_d}{R_d} \nabla^* T_g^* (t, x^*) \cdot x^*
\end{equation}
where $\nabla^* = R_d \nabla$ denotes the rescaled spatial gradient. Notice that \eqref{rescaled_time_derivative} follows directly from the chain rule. The same identity holds for the rescaled counterpart $\rho_v^* (t, x^*) \coloneqq \rho_v (t, x)$ of the vapor mass density. Altogether, the rescaled simplified version of our single droplet evaporation model whose well-posedness will be studied in Section \ref{sec_well_posedness} reads as follows
\begin{align}
    \dot{R}_d &= - \frac{1}{4 \pi} \int_{\Gamma_d^*} J(T_g^*, \rho_v^*) \, d\sigma, &&~\label{radius_evolution} \tag{E1}\\
    \partial_t T_g^* &= \frac{1}{R_d^2} \Delta^* T_g^* - \frac{1}{R_d} \nabla^* T_g^* \cdot ( v_g^* - \dot{R}_d x^* ) &&\text{in } \Omega_g^*, \label{strong_formulation_T} \tag{E2}\\
    \partial_t \rho_v^* &= \frac{1}{R_d^2} \Delta^* \rho_v^* - \frac{1}{R_d} \nabla^* \rho_v^* \cdot ( v_g^* - \dot{R}_d x^* ) &&\text{in } \Omega_g^*, \label{strong_formulation_rho} \tag{E3}\\
    \frac{1}{R_d} \nabla^* T_g^* \cdot n^* &= - J(T_g^*, \rho_v^*) &&\text{on } \Gamma_d^*, \tag{E4}\\
    \frac{1}{R_d} \nabla^* \rho_v^* \cdot n^* &= J(T_g^*, \rho_v^*) &&\text{on } \Gamma_d^*, \tag{E5}\\
    T_g^* &= T_\infty \phantom{\frac{1}{R_d}} &&\text{on } \Gamma_\infty^*, \tag{E6}\\
    \rho_v^* &= \rho_\infty \phantom{\frac{1}{R_d}} &&\text{on } \Gamma_\infty^* \label{drying_condition_rho} \tag{E7}
\end{align}
with the Nemytskii operator $J(T_g^*, \rho_v^*) \coloneqq \rho_\textit{sat}(T_g^*) - \rho_v^*$ for the evaporation rate. Recall that the boundaries $\Gamma_d^* \coloneqq \{x^* \in \mathbb{R}^3 \, : \, |x^*| = 1\}$ and  $\Gamma_\infty^* = \partial \Omega_g^* \setminus \Gamma_d^*$ of the rescaled gas phase are now fixed. In return, the interface velocity now appears in the transport equations of our model. For the ease of presentation, the sub- and superscripts `$d$', `$g$', `$v$', and `$*$' will be omitted in the following.

\section{Well-posedness of the droplet evaporation model} \label{sec_well_posedness}

In this section, we show that the presented droplet evaporation model \eqref{radius_evolution}--\eqref{drying_condition_rho} admits a unique weak solution taking into account the assumptions (A1)--(A7) from above. From a mathematical perspective, we are dealing with a coupled ODE--PDE system for the droplet radius, the temperature distribution, and the vapor mass concentration.  The nonlinear coupling arises from the evaporation rate being modeled by the Hertz--Knudsen equation.\\
As mentioned above, the well-posedness of our single droplet evaporation model will be studied in the context of weak solutions. The underlying Lebesgue and Sobolev spaces are denoted by $L^2 (\Omega)$ and $H^1 (\Omega) = W^{1,2} (\Omega)$, respectively. Let $\langle \cdot, \cdot \rangle$ be the pairing of $X \coloneqq \{ \varphi \in H^1(\Omega) \,:\, \varphi = 0 \text{ on } \Gamma_\infty \}$ and its dual space $X^*$. The droplet evaporation problem whose governing equations and boundary conditions \eqref{radius_evolution}--\eqref{drying_condition_rho} were derived in Section \ref{sec_model} admits the following weak formulation:

\begin{definition}[Weak solution of the droplet evaporation problem] \label{Def:weak_solution}
    Let $R_0 > 0$ and $T_0, \rho_0 \in L^2(\Omega)$ be given. The triple $(R, T, \rho)$ is called a weak solution of the droplet evaporation problem \eqref{radius_evolution}--\eqref{drying_condition_rho} if
    \begin{itemize}
        \item $R \in H^1(I)$ satisfies $R(0) = R_0$ and the evolution equation
        \begin{equation} \label{droplet_radius_evolution}
            \dot{R} (t) = - \frac{1}{4 \pi} \int_{\Gamma} J(T, \rho) \, d\sigma \tag{P1}
        \end{equation}
        for a.e. $t \in I$,
        \item $T, \rho \in L^2(I,H^1(\Omega))$ with $\partial_t T, \partial_t \rho \in L^2(I,X^*)$ satisfy the weak formulations
        \begin{equation} \label{Eq:1.2}
            \langle \partial_t T, \varphi \rangle + \int_{\Omega} \frac{1}{R^2} \nabla T \cdot \nabla \varphi + \frac{1}{R} \nabla T \cdot ( v - \dot{R} x ) \varphi \, dx
            + \frac{1}{R} \int_{\Gamma} J (T, \rho) \varphi \, d\sigma = 0, \tag{P2}
        \end{equation}
        \begin{equation} \label{Eq:1.1}
            \langle \partial_t \rho, \varphi \rangle + \int_{\Omega} \frac{1}{R^2} \nabla \rho \cdot \nabla \varphi + \frac{1}{R} \nabla \rho \cdot ( v - \dot{R} x ) \varphi \, dx
            - \frac{1}{R} \int_{\Gamma} J (T, \rho) \varphi \, d\sigma = 0 \tag{P3}
        \end{equation}
        for all test functions $\varphi \in X$,
        \item the initial and drying conditions
        \begin{gather}
            T(0) = T_0 \text{\qquad and \qquad} \rho(0) = \rho_0 \text{\qquad in } \Omega, \tag{P4}\\
            T = T_\infty \text{\qquad and \qquad} \rho = \rho_\infty \text{\qquad on } \Gamma_\infty \tag{P5} \label{drying_conditions}
        \end{gather}
        are fulfilled. 
    \end{itemize}
\end{definition}

As formulated in Theorem \ref{theorem_maximal_existence}, our main result will be the existence of a unique weak solution in the sense of Definition \ref{Def:weak_solution} until the droplet has evaporated completely. Formally speaking, the strategy of our proof can be outlined as follows. First, we consider the droplet radius to be given and show that the decoupled evaporation problem \eqref{Eq:1.2}--\eqref{drying_conditions} admits a unique weak solution. In Proposition \ref{H1_stability}, the resulting solution operator $S(R) \coloneqq (T, \rho)$ is shown to be Lipschitz continuous. The coupled ODE--PDE system \eqref{droplet_radius_evolution}--\eqref{drying_conditions} with unknown droplet radius is then reformulated as a fixed-point problem. Notice that $R$ is a fixed-point of the following Volterra operator
\begin{equation} \label{volterra}
    \mathcal{T} (R) (t) \coloneqq R_0 - \frac{1}{4 \pi} \int_0^t \int_{\Gamma} J (S(R)) \, d\sigma \, d \tau
\end{equation}
if $(R, T, \rho)$ is a weak solution of the droplet evaporation problem. On the other hand, if $R \in H^1(I)$ satisfies $\mathcal{T} (R) = R$ then $(R, S(R))$ solves the droplet evaporation problem in the sense of Definition \ref{Def:weak_solution}. Therefore, finding a solution to the latter is equivalent to a fixed-point problem. In Section \ref{sec:short_time_existence}, we show that $\mathcal{T}$ is a well-defined contraction on the following set of admissible droplet radii
\begin{equation}
\Sigma_{t_*} \coloneqq \big\{R \in H^1(I) \,:\, R(0) = R_0 \text{ and } |\dot{R}(t)| \leq J(T_\infty, \rho_\infty) \text{ for a.e. } t \in I \big\}
\end{equation}
provided that the underlying time interval $I \coloneqq [0, t_*)$ is sufficiently small. The existence of a fixed-point $R \in \Sigma_{t_*}$ with $\mathcal{T} (R) = R$ then follows from Banach's fixed-point theorem. Finally, we show that the time interval $I$ of the weak solution can be extended until the droplet has evaporated completely.

%----------------------------------------------------------
\subsection{Preliminaries}

Throughout the entire paper, the constant $C > 0$ is meant to be generic in the sense that its value can be redefined. Unless stated otherwise, it only depends on constant coefficients and the spatial domain $\Omega$ of our model, but not on the time interval $I$ of its solution. For the ease of presentation, we define $\| \cdot \|_{2, \Omega} \coloneqq \| \cdot \|_{L^2(\Omega)}$ to abbreviate the $L^2(\Omega)$-norm. The following interpolation inequality will be instrumental to handle boundary terms.

\begin{lemma}\label{Lemma:1}
Let $\Omega \subset \mathbb{R}^3$ be a bounded $C^2$-domain. Then $u \in H^1(\Omega)$ satisfies 
\begin{equation} \label{Ehlring_inequality}
    \| u \|_{2, \partial \Omega}^2 \leq \varepsilon \| \nabla u \|_{2, \Omega}^2 + \frac{C}{\varepsilon} \| u \|_{2, \Omega}^2
\end{equation}
for all $\varepsilon \in (0, 1)$. The constant $C > 0$ only depends on $\Omega$.
\end{lemma}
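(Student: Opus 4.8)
The plan is to prove the trace interpolation inequality \eqref{Ehlring_inequality}, which is a standard result sometimes attributed to Ehrling or Gagliardo. The cleanest approach I would take relies on the continuity of the trace operator combined with an interpolation (or scaling) argument, rather than reconstructing the trace theorem from scratch.

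First I would recall that for a bounded $C^2$-domain $\Omega \subset \mathbb{R}^3$, the trace operator $\gamma : H^1(\Omega) \to L^2(\partial\Omega)$ is bounded, but crucially I need the sharper quantitative bound
\begin{equation*}
    \| u \|_{2, \partial \Omega}^2 \leq C_0 \, \| u \|_{2, \Omega} \, \| u \|_{H^1(\Omega)}
\end{equation*}
which captures the fact that the trace norm interpolates between the $L^2$ and $H^1$ norms. The most transparent way to obtain such a multiplicative bound is via the divergence theorem: choosing a $C^1$ vector field $F$ on $\overline{\Omega}$ with $F \cdot \nu = 1$ on $\partial\Omega$ (possible for a $C^2$-domain), I would apply the identity $\int_{\partial\Omega} u^2 \, d\sigma = \int_{\partial\Omega} u^2 (F\cdot\nu)\, d\sigma = \int_\Omega \nabla\cdot(u^2 F)\, dx$ and expand the right-hand side as $\int_\Omega (2u\, F\cdot\nabla u + u^2\, \nabla\cdot F)\, dx$. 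Estimating the two terms by Cauchy--Schwarz gives $\|u\|_{2,\partial\Omega}^2 \le C\big(\|u\|_{2,\Omega}\|\nabla u\|_{2,\Omega} + \|u\|_{2,\Omega}^2\big)$, where $C$ depends only on $\|F\|_{L^\infty}$ and $\|\nabla\cdot F\|_{L^\infty}$, hence only on $\Omega$.

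From the multiplicative bound the claimed additive inequality follows by Young's inequality. Concretely, applying $ab \le \tfrac{\delta}{2}a^2 + \tfrac{1}{2\delta}b^2$ to the product $\|u\|_{2,\Omega}\|\nabla u\|_{2,\Omega}$ with a suitable parameter $\delta$ tuned to the target $\varepsilon$, I would absorb the gradient contribution into $\varepsilon\|\nabla u\|_{2,\Omega}^2$ and collect the remaining $L^2$-terms into $\tfrac{C}{\varepsilon}\|u\|_{2,\Omega}^2$; the extra $\|u\|_{2,\Omega}^2$ coming from the $\nabla\cdot F$ term is harmless since $\varepsilon < 1$ implies $1 \le 1/\varepsilon$. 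This yields exactly \eqref{Ehlring_inequality} with a constant depending only on $\Omega$, uniformly in $\varepsilon \in (0,1)$.

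The main obstacle, such as it is, lies not in the algebra but in justifying the divergence-theorem step for general $u \in H^1(\Omega)$: the identity above is immediate for $u \in C^1(\overline{\Omega})$, and I would extend it to all of $H^1(\Omega)$ by a density argument, using the density of $C^\infty(\overline{\Omega})$ in $H^1(\Omega)$ for $C^2$-domains together with the continuity of the trace operator to pass to the limit on both sides. The regularity of $\partial\Omega$ is what guarantees both the existence of the smooth extension field $F$ with prescribed normal component and the validity of the trace theorem, so the $C^2$-hypothesis enters precisely here. All constants remain independent of $\varepsilon$, which is the feature that makes the lemma useful for absorbing boundary terms into the diffusion term in the energy estimates for \eqref{Eq:1.2}--\eqref{Eq:1.1}.
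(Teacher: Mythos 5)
Your proof is correct, but it takes a different route from the one the paper actually uses for Lemma \ref{Lemma:1}. The paper's proof is abstract: it chains the continuity of the trace operator $H^{1/2}(\Omega) \to L^2(\partial\Omega)$ with the interpolation inequality $\| u \|_{H^{1/2}(\Omega)}^2 \leq C \| u \|_{H^1(\Omega)} \| u \|_{2,\Omega}$ coming from $H^1 \hookrightarrow H^{1/2} \hookrightarrow L^2$, and then applies Young's inequality exactly as you do in your final step. Your argument instead derives the multiplicative bound $\| u \|_{2,\partial\Omega}^2 \leq C \| u \|_{2,\Omega} \| u \|_{H^1(\Omega)}$ directly from the divergence theorem with a vector field $F$ satisfying $F \cdot \nu = 1$ on $\partial\Omega$, plus a density argument. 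This is essentially the computation the authors sketch in the Remark following the lemma, except that they only carry it out for the specific spherical-shell geometry (using $F = -x$, which has $x \cdot n = -1$ only on the inner sphere $\Gamma$, and therefore requires the extra constraint $u = 0$ on $\Gamma_\infty$), whereas you construct a general extension field with prescribed normal component and so cover arbitrary bounded $C^2$-domains, matching the generality of the stated lemma. What your approach buys is self-containedness: it avoids fractional Sobolev spaces and the cited interpolation result entirely, at the modest cost of having to justify the existence of $F$ (e.g., via the signed distance function, which is where the $C^2$-regularity genuinely enters) and the density argument you correctly flag. Both proofs yield the same $\varepsilon$-dependence, and your bookkeeping of the constants, including the use of $\varepsilon < 1$ to absorb the zeroth-order term, is accurate.
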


From \cite[Lemma 7.9]{herz14} we adopt the following proof:

\begin{proof}
    First of all, the continuity of the trace operator\cite[Theorem 7.58]{adams75} implies
    \begin{equation}
        \| u \|_{2, \partial \Omega} \leq C \| u \|_{H^{1/2}(\Omega)} 
    \end{equation}
    for some $C > 0$. From $H^1(\Omega) \hookrightarrow H^{1/2}(\Omega) \hookrightarrow L^2(\Omega)$ we infer the interpolation inequality
    \begin{equation}
        \| u \|_{H^{1/2}(\Omega)}^2 \leq C \| u \|_{H^1(\Omega)} \| u \|_{2, \Omega}
    \end{equation}
    to be found in \cite[Lemma 7.16]{adams75}. Using Young's inequality, we finally obtain
    \begin{equation}
        \| u \|_{2, \partial \Omega}^2 \leq C \| u \|_{H^1(\Omega)} \| u \|_{2, \Omega} \leq \varepsilon \| u \|_{H^1(\Omega)}^2 + \frac{C}{\varepsilon} \| u \|_{2, \Omega}^2 \leq \varepsilon \| \nabla u \|_{2, \Omega}^2 + \frac{C}{\varepsilon} \| u \|_{2, \Omega}^2
    \end{equation}
    where the last inequality follows from $\varepsilon < 1$.
\end{proof}

\begin{remark}
    It should be pointed out that Lemma \ref{Lemma:1} holds for general smooth geometries. In our case, the boundary $\Gamma$ represents the rescaled surface of a spherical droplet. Moreover, Lemma \ref{Lemma:1} will only be applied to functions $u \in H^1(\Omega)$ satisfying $u = 0$ on $\Gamma_\infty$. Under these additional constraints, one obtains the interpolation inequality \eqref{Ehlring_inequality} also directly from the divergence theorem
\begin{equation}
\begin{gathered}
    \| u \|_{2,\Gamma}^2 = - \int_{\Omega} \nabla \cdot \left( u^2 x \right) \, dx = - \int_{\Omega} 2 u \nabla u \cdot x + 3 u^2 \, dx\\
    \leq 2 C \int_{\Omega} |u| |\nabla u| \, dx \leq \varepsilon \| \nabla u \|_{2,\Omega}^2 + \frac{C}{\varepsilon} \| u \|_{2,\Omega}^2
\end{gathered}
\end{equation}
taking into account $x \cdot n = -1$ for all $x \in \Gamma$ and $|x| \leq C$ for all $x \in \Omega$.
\end{remark}

%----------------------------------------------------------
\subsection{Well-posedness for given droplet radius}

In this section, we apply the method of upper and lower solutions to show that the decoupled droplet evaporation problem \eqref{Eq:1.2}--\eqref{drying_conditions} admits a unique weak solution if $R \in \Sigma_{t_*}$ is given. The idea is to replace the original nonlinear boundary conditions at the droplet surface by the following linear ones \cite{pao92}
\begin{equation}
\begin{gathered}
    \frac{1}{R} \nabla T^{k} \cdot n + L T^{k} = - J^{k-1} + L T^{k-1},\\
    \frac{1}{R} \nabla \rho^{k} \cdot n + \rho^{k} = J^{k-1} + \rho^{k-1}
\end{gathered}
\end{equation}
where $L$ denotes the Lipschitz constant of $\rho_\textit{sat}$ and $J^{k-1} \coloneqq J (T^{k-1}, \rho^{k-1})$ the explicit evaporation rate. By that, we obtain an iteration process $(T^{k-1}, \rho^{k-1}) \mapsto (T^{k}, \rho^{k})$ converging to the desired weak solution.

\begin{lemma} \label{lemma_well_posedness_iteration_process}
Let $T^{k-1}, \rho^{k-1} \in L^2(I, H^1(\Omega))$ and $R \in \Sigma_{t_*}$ be given. Further assume that $R$ is bounded away from zero in the sense that $R(t) \geq \eta > 0$ for a.e. $t \in I$. Then, there exist unique weak solutions
\begin{equation}
    T^k, \rho^k \in L^2(I, H^1(\Omega)) \text{\qquad with \qquad} \partial_t T^k, \partial_t \rho^k \in L^2(I, X^*)
\end{equation}
of the linear iteration process governed by the weak formulations
\begin{equation}\label{weak_iteration_process_T}
    \langle \partial_t T^k, \varphi \rangle + \int_{\Omega} \frac{1}{R^2} \nabla T^{k} \cdot \nabla \varphi + \frac{1}{R} \nabla T^{k} \cdot ( v - \dot{R} x ) \varphi \, dx + \frac{1}{R} \int_{\Gamma} L T^k \varphi \, d\sigma = \frac{1}{R} \int_{\Gamma} (L T^{k-1} - J^{k-1}) \varphi \, d\sigma,
\end{equation}
\begin{equation}\label{weak_iteration_process_rho}
    \langle \partial_t \rho^k, \varphi \rangle + \int_{\Omega} \frac{1}{R^2} \nabla \rho^{k} \cdot \nabla \varphi + \frac{1}{R} \nabla \rho^{k} \cdot ( v - \dot{R} x ) \varphi \, dx + \frac{1}{R} \int_{\Gamma} \rho^k \varphi \, d\sigma = \frac{1}{R} \int_{\Gamma} (\rho^{k-1} + J^{k-1}) \varphi \, d\sigma
\end{equation}
for all test functions $\varphi \in X$ such that the initial and drying conditions
\begin{gather}
    T^k(0) = T_0 \text{\qquad and \qquad} \rho^k(0) = \rho_0 \text{\qquad in } \Omega,\\
    T^k = T_\infty \text{\qquad and \qquad} \rho^k = \rho_\infty \text{\qquad on } \Gamma_\infty
\end{gather}
are fulfilled. Moreover, the weak solutions satisfy the following energy estimates
\begin{gather}
    \sup_{t \in I} \left\| T^k - T_\infty \right\|_{2,\Omega}^2 + \left\| \nabla T^k \right\|_{2,I\times\Omega}^2 \leq C \left[ \left\| T_0 - T_\infty \right\|_{2, \Omega}^2 + \left\| L (T^{k-1} - T_\infty) - J^{k-1} \right\|_{2, I \times \Gamma}^2 \right], \label{energy_estimate1_T}\\
    \sup_{t \in I} \left\| \rho^k - \rho_\infty \right\|_{2,\Omega}^2 + \left\| \nabla \rho^k \right\|_{2,I\times\Omega}^2 \leq C \left[ \left\| \rho_0 - \rho_\infty \right\|_{2, \Omega}^2 + \left\| \rho^{k-1} - \rho_\infty + J^{k-1} \right\|_{2, I \times \Gamma}^2 \right], \label{energy_estimate1_rho}\\
    \left\| \partial_t T^k \right\|_{L^2(I, X^*)}^2 \leq C \left[ \left\| T_0 - T_\infty \right\|_{2, \Omega}^2 + \left\| L (T^{k-1} - T_\infty) - J^{k-1} \right\|_{2, I \times \Gamma}^2 \right], \label{energy_estimate2_T}\\
    \left\| \partial_t \rho^k \right\|_{L^2(I, X^*)}^2 \leq C \left[ \left\| \rho_0 - \rho_\infty \right\|_{2, \Omega}^2 + \left\| \rho^{k-1} - \rho_\infty + J^{k-1} \right\|_{2, I \times \Gamma}^2 \right] \label{energy_estimate2_rho}
\end{gather}
where the constant $C > 0$ depends on the positive lower bound $\eta$ of the droplet radius.
\end{lemma}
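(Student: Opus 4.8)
The plan is to exploit that \eqref{weak_iteration_process_T} and \eqref{weak_iteration_process_rho} are \emph{decoupled}: on the right-hand sides the quantities $T^{k-1}$, $\rho^{k-1}$ and $J^{k-1} = \rho_{\textit{sat}}(T^{k-1}) - \rho^{k-1}$ are given data, so each equation is an independent linear parabolic problem for a single unknown. I would first homogenize the Dirichlet condition on $\Gamma_\infty$ by passing to $u \coloneqq T^k - T_\infty$ and $w \coloneqq \rho^k - \rho_\infty$. Since $T_\infty$ and $\rho_\infty$ are constants, their spatial gradients and time derivatives vanish, so $u, w$ solve problems of the same form but with homogeneous Dirichlet data, i.e. $u, w \in X$, and initial values $u(0) = T_0 - T_\infty$, $w(0) = \rho_0 - \rho_\infty \in L^2(\Omega)$. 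The boundary term $\frac{L}{R}\int_\Gamma T^k\varphi\,d\sigma$ splits into $\frac{L}{R}\int_\Gamma u\varphi\,d\sigma$ plus a constant contribution that is moved to the right-hand side, which is precisely what turns the source into $L(T^{k-1}-T_\infty)-J^{k-1}$ as it appears in \eqref{energy_estimate1_T}; the analogous bookkeeping (with coefficient $1$ and source $\rho^{k-1}-\rho_\infty+J^{k-1}$) applies to $w$.

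Next I would cast each problem in the Gelfand triple $X \hookrightarrow L^2(\Omega) \hookrightarrow X^*$ with the time-dependent bilinear form
\[
a(t; \psi, \varphi) \coloneqq \int_\Omega \frac{1}{R^2}\nabla\psi\cdot\nabla\varphi + \frac{1}{R}\nabla\psi\cdot(v - \dot R x)\,\varphi\,dx + \frac{c}{R}\int_\Gamma \psi\varphi\,d\sigma,
\]
where $c = L$ for the temperature and $c = 1$ for the vapor density. Existence and uniqueness of $u, w \in L^2(I,X)$ with $\partial_t u, \partial_t w \in L^2(I,X^*)$ then follow from the standard theory for abstract linear parabolic equations, provided $a$ is measurable in $t$, bounded, and satisfies a G{\aa}rding inequality. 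Measurability holds because all coefficients depend measurably on $t$ through $R \in H^1(I) \hookrightarrow C(\overline I)$ and, via (A6), through $t \mapsto v$ into $L^\infty$. Boundedness is routine: $R \geq \eta$ controls $\tfrac1R,\tfrac{1}{R^2}$, the rescaled velocity lies in $L^\infty$ by (A6), $|\dot R| \leq J(T_\infty,\rho_\infty)$ is built into $\Sigma_{t_*}$, $x$ is bounded on $\Omega$, and the boundary term is estimated through the trace inequality of Lemma~\ref{Lemma:1}. The embedding $W(I) \hookrightarrow C(\overline I, L^2(\Omega))$ makes the initial conditions meaningful.

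The crux, and the step I expect to be the main obstacle, is the G{\aa}rding inequality obtained by testing with $\psi = \varphi = u$. The diffusion term gives $\int_\Omega \tfrac{1}{R^2}|\nabla u|^2 \geq R_{\max}^{-2}\|\nabla u\|_{2,\Omega}^2$, where $R_{\max} \coloneqq \sup_I R \leq R_0 + t_* J(T_\infty,\rho_\infty) < \infty$ by the definition of $\Sigma_{t_*}$. The first-order term is the delicate one: writing $b \coloneqq v - \dot R x$ and $\nabla u\cdot b\,u = \tfrac12 b\cdot\nabla(u^2)$, I integrate by parts. Using $\nabla\cdot v = 0$ yields $\nabla\cdot b = -3\dot R$, while $v\cdot n = 0$ and $x\cdot n = -1$ on $\Gamma$ together with $u = 0$ on $\Gamma_\infty$ give $b\cdot n = \dot R$ on the boundary. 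Hence the convection term collapses to the lower-order expression $\tfrac{3\dot R}{2R}\|u\|_{2,\Omega}^2 + \tfrac{\dot R}{2R}\|u\|_{2,\Gamma}^2$; the boundary contribution is absorbed via Lemma~\ref{Lemma:1} into a small multiple of $\|\nabla u\|_{2,\Omega}^2$ plus a multiple of $\|u\|_{2,\Omega}^2$, while the sign-definite term $\tfrac{c}{R}\|u\|_{2,\Gamma}^2 \geq 0$ only helps. This produces $a(t;u,u) \geq \alpha\|\nabla u\|_{2,\Omega}^2 - \lambda\|u\|_{2,\Omega}^2$ with $\alpha,\lambda$ depending on $\eta$, as required.

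Finally, the four energy estimates follow from the usual a priori arguments. Testing with $\varphi = u$, using $\langle\partial_t u, u\rangle = \tfrac12\tfrac{d}{dt}\|u\|_{2,\Omega}^2$ and integrating over $(0,t)$, the coercive part controls $\|\nabla u\|_{2,I\times\Omega}^2$, whereas the source $\tfrac1R\int_\Gamma(\cdots)\,u\,d\sigma$ is handled by Cauchy--Schwarz followed by Lemma~\ref{Lemma:1} and Young's inequality to peel off $\varepsilon\|\nabla u\|_{2,\Omega}^2$ (absorbed on the left) plus the boundary data in $L^2(I\times\Gamma)$; Gr\"onwall's lemma removes the residual $\|u\|_{2,\Omega}^2$ term and gives \eqref{energy_estimate1_T}. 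For \eqref{energy_estimate2_T} I estimate $\|\partial_t u\|_{X^*}$ by pairing the weak equation with an arbitrary $\varphi\in X$, $\|\varphi\|_X\leq 1$, invoking boundedness of $a$ and of the source functional, then integrating in time and inserting the bound \eqref{energy_estimate1_T}. Repeating the identical computation with $c = 1$ and source $\rho^{k-1}-\rho_\infty+J^{k-1}$ yields \eqref{energy_estimate1_rho} and \eqref{energy_estimate2_rho}, and undoing the substitutions recovers $T^k = u + T_\infty$ and $\rho^k = w + \rho_\infty$.
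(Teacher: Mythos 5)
Your proposal is correct and follows essentially the same route as the paper: the paper likewise delegates existence/uniqueness to standard linear parabolic theory (Galerkin's method, which underlies the Lions framework you invoke) and then derives the energy estimates by testing with $T^k-T_\infty$ and $\rho^k-\rho_\infty$, absorbing the boundary and convection terms via Lemma~\ref{Lemma:1}, Young's inequality, and Gr\"onwall, and obtaining the $X^*$-bounds by duality. The only cosmetic difference is that the paper estimates the term $-\tfrac{\dot R}{R}\int_\Omega(\nabla w\cdot x)\,w\,dx$ directly by H\"older and Young rather than integrating it by parts as you do; both yield the same absorbable bound.
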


\begin{proof}
    The unique existence of the weak solutions $T^k$ and $\rho^k$ follows from Galerkin's method as carried out in \cite{evans10, lady68} for similar linear parabolic problems. Testing \eqref{weak_iteration_process_T} with $\varphi = T^k - T_\infty \eqqcolon w^k$ implies
    \begin{equation}
        \frac{1}{2} \frac{d}{dt} \| w^k \|_{2,\Omega}^2 + \frac{1}{R^2} \| \nabla w^k \|_{2,\Omega}^2 - \frac{\dot{R}}{R} \int_\Omega (\nabla w^k \cdot x) w^k \, dx + \frac{L}{R} \| w^k \|_{2,\Gamma}^2 = \frac{1}{R} \int_\Gamma (L w^{k-1} - J^{k-1}) w^k \, d\sigma
    \end{equation}
    since $v$ is divergence-free and tangential at the droplet surface. Applying H\"older's inequality yields
    \begin{equation}
        \frac{1}{2} \frac{d}{dt} \| w^k \|_{2,\Omega}^2 + \| \nabla w^k \|_{2,\Omega}^2 \leq C \big[ \| \nabla w^k \|_{2,\Omega} \| w^k \|_{2,\Omega} + \| L w^{k-1} - J^{k-1} \|_{2,\Gamma} \| w^k \|_{2,\Gamma} \big]
    \end{equation}
    due to $|\dot{R}| \leq J_\infty$ and $R$ being bounded away from zero. The estimate
    \begin{equation}
        \frac{1}{2} \frac{d}{dt} \| w^k \|_{2,\Omega}^2 + \| \nabla w^k \|_{2,\Omega}^2 \leq C \Big[ \varepsilon \| \nabla w^k \|_{2,\Omega}^2 + \frac{1}{\varepsilon} \| w^k \|_{2,\Omega}^2 + \| L w^{k-1} - J^{k-1} \|_{2,\Gamma}^2 \Big]
    \end{equation}
    then follows from Young's inequality with $\varepsilon > 0$ and the interpolation inequality from Lemma \ref{Lemma:1}. The choice $\varepsilon = 1/(2C)$ allows us to absorb the $L^2$-norm of the gradient. Applying Gr\"onwall's lemma to the resulting inequality
    \begin{equation}
        \frac{d}{dt} \| w^k \|_{2,\Omega}^2 + \| \nabla w^k \|_{2,\Omega}^2 \leq C \big[ \| w^k \|_{2,\Omega}^2 + \| L w^{k-1} - J^{k-1} \|_{2,\Gamma}^2 \big]
    \end{equation}
    implies \eqref{energy_estimate1_T} while \eqref{energy_estimate1_rho} can be derived analogously. On the other hand, we obtain
    \begin{equation} \label{dual_estimate_T}
        \big| \langle \partial_t T^k, \varphi \rangle \big| \leq C \big[ \| \nabla w^k \|_{2,\Omega} + \| L w^{k-1} - J^{k-1} \|_{2,\Gamma} \big] \| \varphi \|_X
    \end{equation}
    directly from \eqref{weak_iteration_process_T} by applying H\"older's inequality since (A6) implies $v \in L^\infty (\Omega)$. The estimate
    \begin{equation} \label{dual_estimate_T_final}
        \| \partial_t T^k \|_{X^*} \leq C \big[ \| \nabla w^k \|_{2,\Omega} + \| L w^{k-1} - J^{k-1} \|_{2,\Gamma} \big]
    \end{equation}
    then follows from the fact that \eqref{dual_estimate_T} holds for all $\varphi \in X$. Inserting \eqref{energy_estimate1_T} into the resulting inequality
    \begin{equation}
        \| \partial_t T^k \|_{L^2(I, X^*)}^2 \leq C \big[ \| \nabla w^k \|_{2,I \times \Omega}^2 + \| L w^{k-1} - J^{k-1} \|_{2,I\times\Gamma}^2 \big]
    \end{equation}
    finally implies \eqref{energy_estimate2_T} while \eqref{energy_estimate2_rho} can be derived analogously.
\end{proof}

We are now in the position to show the well-posedness of \eqref{Eq:1.2}--\eqref{drying_conditions} for given droplet radius. Recall from assumption (A5) that the drying conditions are required to satisfy $\rho_\textit{sat} (T_\infty) \geq \rho_\infty$ such that the droplet actually evaporates. In the following proof, the constant tuples $(T_*, \rho_\infty)$ and $(T_\infty, \rho_*)$ will serve as lower and upper bounds for the weak solution $(T, \rho)$ of the decoupled droplet evaporation problem.

\begin{proposition} \label{existence_Rd_given}
    Assume $R \in \Sigma_{t_*}$ to be given and bounded away from zero. Further suppose that the initial values $T_0, \rho_0 \in L^2 (\Omega)$ are bounded by the drying conditions and constants from \textnormal{(A5)} as follows
    \begin{equation}\label{initial_lower_and_upper_bounds}
        T_* \leq T_0 (x) \leq T_\infty \text{\qquad and \qquad} \rho_\infty \leq \rho_0 (x) \leq \rho_*
    \end{equation}
    for a.e. $x \in \Omega$. Then, the decoupled droplet evaporation problem \eqref{Eq:1.2}--\eqref{drying_conditions} admits a unique weak solution 
    \begin{equation}
        (T, \rho) \in L^2 (I, H^1(\Omega))^2 \text{\qquad with \qquad} (\partial_t T, \partial_t \rho) \in L^2 (I, X^*)^2
    \end{equation}
    satisfying the pointwise lower and upper bounds
    \begin{equation} \label{lower_and_upper_bounds}
        T_* \leq T (t,x) \leq T_\infty \text{\qquad and \qquad} \rho_\infty \leq \rho (t,x) \leq \rho_*
    \end{equation}
    for a.e. $(t, x) \in I \times \Omega$.
\end{proposition}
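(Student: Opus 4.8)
The plan is to realise the weak solution as the monotone limit of the linear iteration from Lemma \ref{lemma_well_posedness_iteration_process}, i.e.\ to carry out the method of upper and lower solutions with the constant pairs $(T_*, \rho_\infty)$ and $(T_\infty, \rho_*)$ as ordered sub- and supersolution. The first thing I would record is that, after the linearising substitution, the explicit boundary data decouple into $f_T(T,\rho) \coloneqq L T - J(T,\rho) = L T - \rho_{\textit{sat}}(T) + \rho$ and $f_\rho(T) \coloneqq \rho + J(T,\rho) = \rho_{\textit{sat}}(T)$. By (A4), $0 \le \rho_{\textit{sat}}' \le L$, so $f_T$ is nondecreasing in both arguments and $f_\rho$ is nondecreasing in $T$; this cooperative monotonicity is what makes the scheme work. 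A short computation using $\rho_{\textit{sat}}(T_\infty) = \rho_*$ and $\rho_{\textit{sat}}(T_*) = \rho_\infty$ (from (A5)) shows that both constant pairs are fixed points of the iteration, while (A5) also gives the ordering $T_* \le T_\infty$ and $\rho_\infty \le \rho_*$.

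Next I would initialise the iteration at the supersolution, $(T^0, \rho^0) = (T_\infty, \rho_*)$, and prove the order relations by a weak comparison principle. For the first step, $w \coloneqq T^1 - T_\infty$ solves the homogeneous linear problem with Robin datum $(1/R)\nabla w \cdot n + L w = 0$ on $\Gamma$, $w = 0$ on $\Gamma_\infty$, and $w(0) = T_0 - T_\infty \le 0$; testing with $w^+$ and using that the boundary contribution $(L/R)\|w^+\|_{2,\Gamma}^2$ has a favourable sign yields $T^1 \le T_\infty$, and analogously $\rho^1 \le \rho_*$. Because in the linearised scheme $T^k$ and $\rho^k$ are solved independently and depend on the previous iterate only through the monotone data $f_T, f_\rho$, the same comparison argument shows that ordering of the data is inherited by the solutions. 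An induction then produces a monotonically decreasing sequence that stays above the subsolution, $(T_*, \rho_\infty) \le (T^k, \rho^k) \le (T^{k-1}, \rho^{k-1}) \le (T_\infty, \rho_*)$, so that $(T^k, \rho^k)$ converges pointwise a.e.\ to some limit $(T, \rho)$ satisfying the bounds \eqref{lower_and_upper_bounds}.

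To identify $(T, \rho)$ as a weak solution I would invoke the energy estimates \eqref{energy_estimate1_T}--\eqref{energy_estimate2_rho}, which are uniform in $k$ because the boundary data are controlled by the order interval. These give weak convergence in $L^2(I, H^1(\Omega))$ and weak convergence of the time derivatives in $L^2(I, X^*)$; by the Aubin--Lions lemma the sequence is relatively compact in $L^2(I, H^s(\Omega))$ for $1/2 < s < 1$, whence the traces converge strongly in $L^2(I \times \Gamma)$. This is the step I expect to be the main obstacle, since the nonlinear boundary term $\rho_{\textit{sat}}(T^{k-1})$ can only be passed to the limit through strong trace convergence together with the continuity and boundedness of $\rho_{\textit{sat}}$; the monotone a.e.\ convergence inside $\Omega$ alone does not control the boundary values. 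Once the traces converge, the linear Robin contributions cancel in the limit and $(T, \rho)$ is seen to satisfy \eqref{Eq:1.2}--\eqref{Eq:1.1}, while the initial and drying conditions pass by continuity of the trace in time and in space.

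Finally, uniqueness would follow by a standard energy argument: for two solutions I set $W = T_1 - T_2$ and $Z = \rho_1 - \rho_2$, subtract the weak formulations and test with $W$ and $Z$. The boundary differences are estimated through $|\rho_{\textit{sat}}(T_1) - \rho_{\textit{sat}}(T_2)| \le L|W|$ from (A4) and the interpolation inequality of Lemma \ref{Lemma:1}, which lets me absorb the boundary terms into $\varepsilon \|\nabla W\|_{2,\Omega}^2 + \varepsilon \|\nabla Z\|_{2,\Omega}^2$ plus lower-order $L^2(\Omega)$-terms; Gr\"onwall's lemma then forces $W = Z = 0$. In particular, the monotone scheme converges to this unique solution irrespective of whether it is started from the super- or the subsolution.
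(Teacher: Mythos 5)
Your proposal follows essentially the same route as the paper: the same linear iteration from Lemma \ref{lemma_well_posedness_iteration_process} started at the supersolution $(T_\infty,\rho_*)$, the same comparison arguments via positive-part test functions and Gr\"onwall, the same uniform energy estimates for the weak limits, and the same Gr\"onwall-based uniqueness argument. Your observation that the linearised boundary data $L T-\rho_\textit{sat}(T)+\rho$ and $\rho_\textit{sat}(T)$ are cooperative (nondecreasing in each argument) is exactly the structural fact the paper uses implicitly through the inequalities $J^{k-1}-J^k\le L\,(T^{k-1}-T^k)$ and $\rho_\textit{sat}(T^k)-\rho_\infty\le L\,(T^k-T_*)$. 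The one place you genuinely diverge is the passage to the limit in the nonlinear boundary term: you obtain strong convergence of the traces via Aubin--Lions compactness in $L^2(I,H^s(\Omega))$ for $s\in(1/2,1)$, whereas the paper exploits the monotone ordering directly, deducing pointwise a.e.\ convergence (including of the traces, since $(T^{k+1}-T^k)^+=0$ in $H^1(\Omega)$ forces the traces to be ordered as well) and concluding convergence in $L^2(I\times\Gamma)$ by dominated convergence. Your compactness argument is the more robust of the two, as it does not rely on the ordered structure and would survive a non-monotone approximation scheme; the paper's is more elementary but buys the trace convergence only because of monotonicity. Both are valid, and the rest of your outline matches the paper's proof step for step.
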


\begin{proof}    
    Following \cite{pao92, evans10} we apply the method of upper and lower solutions. Therefore, let us consider the iteration process $(T^{k-1}, \rho^{k-1}) \mapsto (T^{k}, \rho^{k})$ defined by the linear parabolic problems from Lemma \ref{lemma_well_posedness_iteration_process}.\\
    \textit{Step 1.} We show by induction that the iterative solutions resulting from $(T^{0}, \rho^{0}) = (T_\infty, \rho_*)$ satisfy
    \begin{equation} \label{monotony}
        T_* \leq T^{k} \leq T^{k-1} \leq T_\infty \quad \text{and} \quad \rho_\infty \leq \rho^{k} \leq \rho^{k-1} \leq \rho_*
    \end{equation}
    for all $k \in \mathbb{N}$. Let us verify $T_* \leq T^1 \leq T_\infty$ at first. Using
    \begin{equation}
        w \coloneqq (T^{1} - T_\infty)^+ = \begin{cases}
            T^1 - T_\infty & \text{if } T^1 > T_\infty\\
            0 & \text{if } T^1 \leq T_\infty
        \end{cases}
    \end{equation}
    as a test function in \eqref{weak_iteration_process_T} yields the identity
    \begin{equation}\label{energy_w1}
    \begin{gathered}
        \frac{1}{2} \frac{d}{dt} \| w \|_{2,\Omega}^2 + \frac{1}{R^{2}} \| \nabla w \|_{2,\Omega}^2 + \frac{L}{R} \| w \|_{2,\Gamma}^2 = \frac{\dot{R}}{R} \int_{\Omega} (\nabla w \cdot x) w \, dx
    \end{gathered}
    \end{equation}
    taking into account $J^0 = 0$, $\nabla \cdot v = 0$ in $\Omega$, and $v \cdot n = 0$ on $\Gamma$. From Young's inequality we infer
    \begin{equation}\label{Young}
        \frac{\dot{R}}{R} \int_{\Omega} (\nabla w \cdot x) w \, dx \leq \varepsilon \| \nabla w \|_{2,\Omega}^2 + \frac{C}{\varepsilon} \| w \|_{2,\Omega}^2
    \end{equation}
    due to the term $|\dot{R}| / R$ and the domain $\Omega$ being bounded. Inserting \eqref{Young} with $\varepsilon = 1 / (2 R^2)$ into \eqref{energy_w1} allows us to absorb the $L^2$-norm of the gradient. Applying Gr\"onwall's lemma to the resulting inequality
    \begin{equation}
        \frac{d}{dt} \| w \|_{2,\Omega}^2 \leq C \| w \|_{2,\Omega}^2
    \end{equation}
    implies $w = 0$ and thus $T^1 \leq T_\infty$ taking into account that $w (0) = (T_0 - T_\infty)^+ = 0$. On the other hand, testing \eqref{weak_iteration_process_T} with $\tilde{w} \coloneqq (T_* - T^1)^+$ as defined above implies
    \begin{equation}
    \begin{gathered}
        \frac{1}{2} \frac{d}{dt} \| \tilde{w} \|_{2,\Omega}^2 + \frac{1}{R^{2}} \| \nabla \tilde{w} \|_{2,\Omega}^2 + \frac{L}{R} \| \tilde{w} \|_{2,\Gamma}^2 + \frac{L}{R} \int_{\Gamma} (T_\infty - T_*) \tilde{w} \, d\sigma = \frac{\dot{R}}{R} \int_{\Omega} (\nabla \tilde{w} \cdot x) \tilde{w} \, dx
    \end{gathered}
    \end{equation}
    which allows us to conclude $T_* \leq T^1$ as before. It should be mentioned that $\tilde{w}$ is an admissible test function since $T_* \leq T_\infty$ guarantees $\tilde{w} = 0$ on $\Gamma_\infty$. The inequalities $\rho_\infty \leq \rho^{1} \leq \rho_*$ for the vapor mass density are obtained analogously.\\
    Now let us assume that \eqref{monotony} already holds for some $k \in \mathbb{N}$. Subtracting the weak formulations for $T^{k+1}$ and $T^k$ after inserting $w^{k+1} \coloneqq (T^{k+1} - T^{k})^+$ as a test function leads to the identity
    \begin{multline}\label{energy_wk}
        \frac{1}{2} \frac{d}{dt} \| w^{k+1} \|_{2,\Omega}^2 + \frac{1}{R^{2}} \| \nabla w^{k+1} \|_{2,\Omega}^2 + \frac{L}{R} \| w^{k+1} \|_{2,\Gamma}^2 = \frac{\dot{R}}{R} \int_{\Omega} (\nabla w^{k+1} \cdot x) w^{k+1} \, dx\\
        + \int_{\Gamma} \big[ J^{k-1} - J^k - L (T^{k-1} - T^{k}) \big] w^{k+1} \, d\sigma
    \end{multline}
    for all $k \geq 2$. Since our induction hypothesis \eqref{monotony} implies
    \begin{equation}
        J^{k-1} - J^k \leq \rho_\textit{sat} (T^{k-1}) - \rho_\textit{sat} (T^{k}) \leq L (T^{k-1} - T^{k}),
    \end{equation}
    we obtain $w^{k+1} = 0$ and thus $T^{k+1} \leq T^{k}$ from the same Gr\"onwall argument as before. Likewise, testing \eqref{weak_iteration_process_T} with $\tilde{w}^{k+1} \coloneqq (T_* - T^{k+1})^+$ yields $T_* \leq T^{k+1}$ taking into account that
    \begin{equation}
        \rho_\textit{sat} (T^{k}) - \rho^{k} \leq \rho_\textit{sat} (T^{k}) - \rho_\infty \leq L (T^{k} - T_*)
    \end{equation}
    already holds. The remaining inequalities $\rho_\infty \leq \rho^{k+1} \leq \rho^{k}$ for the vapor mass density are obtained analogously. This concludes the proof of \eqref{monotony} for all $k \in \mathbb{N}$ by induction.\\
    \textit{Step 2.} We are now in the position to show that the iteration process $\{ (T^k, \rho^k) \}_{k \in \mathbb{N}}$ from Lemma \ref{lemma_well_posedness_iteration_process} converges (up to a subsequence) to the desired weak solution $(T, \rho)$ of the decoupled evaporation problem. From \eqref{monotony} we infer that the pointwise limits
    \begin{equation}
        T (t,x) \coloneqq \lim_{k \to \infty} T^k (t,x) \quad \text{and} \quad \rho (t,x) \coloneqq \lim_{k \to \infty} \rho^k (t,x)
    \end{equation}
    exist for a.e. $(t,x) \in I \times \Omega$. In fact, we have $T^k \to T$ and $\rho^k \to \rho$ in $L^2 (I \times \Omega)$ and $L^2 (I \times \Gamma)$ due to Lebesgue's dominated convergence theorem. According to the energy estimates from Lemma \ref{lemma_well_posedness_iteration_process}, it follows from \eqref{monotony} that the iterative solutions $\{T^k\}_{k\in\mathbb{N}}$ and $\{\rho^k\}_{k\in\mathbb{N}}$ are both bounded in $L^2 (I, H^1(\Omega))$. Likewise, their time derivatives $\{ \partial_t T^k \}_{k\in\mathbb{N}}$ and $\{ \partial_t \rho^k \}_{k\in\mathbb{N}}$ are both bounded in $L^2 (I, X^*)$. Hence, there exist a subsequence $\{(T^{k_l}, \rho^{k_l})\}_{l \in \mathbb{N}}$ such that
    \begin{equation}
    \begin{gathered}
        T^{k_l} \rightharpoonup T \quad \text{and} \quad \rho^{k_l} \rightharpoonup \rho \quad \text{weakly in } L^2 (I, H^1(\Omega)),\\
        \partial_t T^{k_l} \rightharpoonup \partial_t T \quad \text{and} \quad \partial_t \rho^{k_l} \rightharpoonup \partial_t \rho \quad \text{weakly in } L^2 (I, X^*).
    \end{gathered}
    \end{equation}
    Consequently, the respective limits $T$ and $\rho$ satisfy the following weak formulations
    \begin{equation}
        \langle \partial_t T, \varphi \rangle + \int_{\Omega} \frac{1}{R^2} \nabla T \cdot \nabla \varphi + \frac{1}{R} \nabla T \cdot ( v - \dot{R} x ) \varphi \, dx + \frac{1}{R} \int_{\Gamma} L T \varphi \, d\sigma = \frac{1}{R} \int_{\Gamma} \big[ L T - J(T, \rho) \big] \varphi \, d\sigma,
    \end{equation}
    \begin{equation}
    \begin{gathered}
        \langle \partial_t \rho, \varphi \rangle + \int_{\Omega} \frac{1}{R^2} \nabla \rho \cdot \nabla \varphi + \frac{1}{R} \nabla \rho \cdot ( v - \dot{R} x ) \varphi \, dx + \frac{1}{R} \int_{\Gamma} \rho \varphi \, d\sigma = \frac{1}{R} \int_{\Gamma} \big[ \rho + J(T, \rho) \big] \varphi \, d\sigma
    \end{gathered}
    \end{equation}
    for a.e. $t \in I$ and all test functions $\varphi \in X$. Subtracting the redundant boundary integrals yields the desired weak formulation \eqref{Eq:1.2} and \eqref{Eq:1.1} of the decoupled evaporation problem.\\
    \textit{Step 3.} To show uniqueness, let $(T_1, \rho_1)$ and $(T_2, \rho_2)$ be two weak solutions of the decoupled droplet evaporation problem \eqref{Eq:1.2}--\eqref{drying_conditions}. Then, their differences $\delta T \coloneqq T_1 - T_2$ and $\delta \rho \coloneqq \rho_1 - \rho_2$ satisfy
    \begin{equation} \label{weak_delta_T}
        \frac{1}{2} \frac{d}{dt} \| \delta T \|_{2,\Omega}^2 + \frac{1}{R^2} \| \nabla \delta T \|_{2,\Omega}^2 = \frac{\dot{R}}{R} \int_\Omega (\nabla \delta T \cdot x) \delta T \, dx - \frac{1}{R} \int_\Gamma (\delta \rho_\textit{sat} - \delta \rho) \delta T \, d\sigma,
    \end{equation}
    \begin{equation} \label{weak_delta_rho}
        \frac{1}{2} \frac{d}{dt} \| \delta \rho \|_{2,\Omega}^2 + \frac{1}{R^2} \| \nabla \delta \rho \|_{2,\Omega}^2 = \frac{\dot{R}}{R} \int_\Omega (\nabla \delta \rho \cdot x) \delta \rho \, dx + \frac{1}{R} \int_\Gamma (\delta \rho_\textit{sat} - \delta \rho) \delta \rho \, d\sigma
    \end{equation}
    with $\delta \rho_\textit{sat} \coloneqq \rho_\textit{sat} (T_1) - \rho_\textit{sat} (T_2)$. Applying Young's inequality with $\varepsilon > 0$ yields
    \begin{equation}
        \frac{\dot{R}}{R} \int_{\Omega} (\nabla \delta T \cdot x) \delta T \, dx \leq \varepsilon \| \nabla \delta T \|_{2,\Omega}^2 + \frac{C}{\varepsilon} \| \delta T \|_{2,\Omega}^2,
    \end{equation}
    \begin{equation}
        \frac{\dot{R}}{R} \int_{\Omega} (\nabla \delta \rho \cdot x) \delta \rho \, dx \leq \varepsilon \| \nabla \delta \rho \|_{2,\Omega}^2 + \frac{C}{\varepsilon} \| \delta \rho \|_{2,\Omega}^2
    \end{equation}
    which allows us to absorb the $L^2$-norms of the gradients. Therefore, adding \eqref{weak_delta_T} and \eqref{weak_delta_rho} implies
    \begin{multline} \label{sum_of_differences}
        \frac{d}{dt} \big( \| \delta T \|_{2,\Omega}^2 + \| \delta \rho \|_{2,\Omega}^2 \big) + \| \nabla \delta T \|_{2,\Omega}^2 + \| \nabla \delta \rho \|_{2,\Omega}^2
        \leq C \big( \| \delta T \|_{2,\Omega}^2 + \| \delta \rho \|_{2,\Omega}^2 \big)\\
        - \frac{1}{R} \int_\Gamma (\delta \rho_\textit{sat} - \delta \rho) (\delta T - \delta \rho) \, d\sigma
    \end{multline}
    since $R$ is bounded away from zero. Taking into account that $\rho_\textit{sat}$ is Lipschitz continuous, we obtain
    \begin{equation} \label{boundary_Lipschitz_estimate}
    \begin{gathered}
        - \frac{1}{R} \int_\Gamma (\delta \rho_\textit{sat} - \delta \rho) (\delta T - \delta \rho) \, d\sigma \leq C \big( \| \delta T \|_{2,\Gamma}^2 + \| \delta \rho \|_{2,\Gamma}^2 \big)\\
        \leq \varepsilon \big( \| \nabla \delta T \|_{2,\Omega}^2 + \| \nabla \delta \rho \|_{2,\Omega}^2 \big) + \frac{C}{\varepsilon} \big( \| \delta T \|_{2,\Omega}^2 + \| \delta \rho \|_{2,\Omega}^2 \big)
    \end{gathered}
    \end{equation}
    using the interpolation inequality from Lemma \ref{Lemma:1}. Inserting \eqref{boundary_Lipschitz_estimate} with $\varepsilon = 1/2$ into \eqref{sum_of_differences} allows us to absorb the $L^2$-norms of the gradients. Applying Gr\"onwall's lemma to the resulting inequality
    \begin{equation}
        \frac{d}{dt} \big( \| \delta T \|_{2,\Omega}^2 + \| \delta \rho \|_{2,\Omega}^2 \big) \leq C \big( \| \delta T \|_{2,\Omega}^2 + \| \delta \rho \|_{2,\Omega}^2 \big)
    \end{equation}
    finally implies $\| \delta T(t)\|_{2,\Omega}^2 + \| \delta \rho(t) \|_{2,\Omega}^2 = 0$ for all $t \in I$ and thus $T_1 = T_2$ and $\rho_1 = \rho_2$ a.e. in $I \times \Omega$.
\end{proof}

%----------------------------------------------------------
\subsection{Short time existence for the coupled evaporation problem} \label{sec:short_time_existence}

We are now in the position to show that the coupled droplet evaporation problem from Section \ref{sec_model} admits a unique weak solution in the sense of Definition \ref{Def:weak_solution} if the time interval $I = [0, t_*)$ is sufficiently small. From now on, the droplet radius $R$ is no longer given but considered as an additional unknown. First of all, we need to understand how the weak solution of the decoupled problem from Proposition \ref{existence_Rd_given} depends on the corresponding droplet radius:

\begin{proposition} \label{H1_stability}
    Let the initial values $T_0, \rho_0 \in L^2(\Omega)$ be bounded by \eqref{initial_lower_and_upper_bounds} a.e. in $\Omega$. Further assume that the droplet radii $R_1, R_2 \in \Sigma_{t_*}$ are bounded away from zero in the sense that $R_1 (t) \geq \eta_1 > 0$ and $R_2 (t) \geq \eta_2 > 0$ for a.e. $t \in I$. Then, the corresponding weak solutions $(T_1, \rho_1)$ and $(T_2, \rho_2)$ of the decoupled droplet evaporation problem \eqref{Eq:1.2}--\eqref{drying_conditions} satisfy the following stability estimates
    \begin{equation} \label{Hinf_stability_estimate}
    \begin{gathered}
        \sup_{t \in I} \| T_1 - T_2 \|_{2,\Omega}^2 + \| \nabla (T_1 - T_2) \|_{2, I\times\Omega}^2 \leq C \| R_1 - R_2 \|_{H^1(I)}^2,\\
        \sup_{t \in I} \| \rho_1 - \rho_2 \|_{2,\Omega}^2 + \| \nabla (\rho_1 - \rho_2) \|_{2, I\times\Omega}^2 \leq C \| R_1 - R_2 \|_{H^1(I)}^2
    \end{gathered}
    \end{equation}
    where the Lipschitz constant $C > 0$ depends on the lower bound $\eta \coloneqq \min \{ \eta_1, \eta_2 \}$ of both droplet radii.
\end{proposition}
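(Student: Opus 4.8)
The plan is to adapt the energy/uniqueness argument from Step~3 of the proof of Proposition~\ref{existence_Rd_given}, now keeping track of the extra terms produced by the mismatch of the two radii $R_1\neq R_2$ and their velocity fields $v_1\neq v_2$. Write $\delta T := T_1-T_2$ and $\delta\rho := \rho_1-\rho_2$; since $T_i=T_\infty$ and $\rho_i=\rho_\infty$ on $\Gamma_\infty$, both differences lie in $X$ and are admissible test functions, and since $T_i(0)=T_0$, $\rho_i(0)=\rho_0$ they vanish at $t=0$. I subtract the weak formulation \eqref{Eq:1.2} written for $(T_2,\rho_2,R_2,v_2)$ from the one for $(T_1,\rho_1,R_1,v_1)$ and test with $\delta T$, and do likewise with \eqref{Eq:1.1} and $\delta\rho$. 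The diffusion and convection terms split into a diagonal part, in which $1/R_1^2$ and $1/R_1$ multiply $\nabla\delta T$ (respectively $\nabla\delta\rho$), plus error parts in which the differences $\tfrac{1}{R_1^2}-\tfrac{1}{R_2^2}$, $\tfrac{1}{R_1}-\tfrac{1}{R_2}$, $v_1-v_2$ and $\dot R_1-\dot R_2$ multiply the gradients $\nabla T_2$, $\nabla\rho_2$ of the second solution. The diagonal parts are treated exactly as before: the coercive terms $\tfrac{1}{R_1^2}\|\nabla\delta T\|_{2,\Omega}^2$ are retained, the drift term $-\tfrac{\dot R_1}{R_1}\int_\Omega(\nabla\delta T\cdot x)\delta T\,dx$ is absorbed via Young's inequality, and the coupling boundary integrals in $\delta\rho_\textit{sat}-\delta\rho$ are handled jointly after adding the two equations, using the Lipschitz continuity of $\rho_\textit{sat}$ and the trace interpolation inequality of Lemma~\ref{Lemma:1}.

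The new work is to estimate the error terms by $\|R_1-R_2\|_{H^1(I)}$. Since both radii exceed $\eta$, elementary manipulations give $|\tfrac{1}{R_1^2}-\tfrac{1}{R_2^2}|\le C|R_1-R_2|$ and $|\tfrac{1}{R_1}-\tfrac{1}{R_2}|\le C|R_1-R_2|$; assumption~(A6) yields $\|v_1-v_2\|_{L^\infty(\Omega)}\le L_\eta|R_1-R_2|$ pointwise in time; and the uniform bound $|\dot R_i|\le J(T_\infty,\rho_\infty)$ from the definition of $\Sigma_{t_*}$ controls the remaining coefficients. The factors $\nabla T_2$, $\nabla\rho_2$ are only in $L^2(I\times\Omega)$, which is precisely what the energy estimates and the pointwise bounds $T_*\le T_2\le T_\infty$, $\rho_\infty\le\rho_2\le\rho_*$ of Proposition~\ref{existence_Rd_given} furnish. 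Every error term of the form $C|R_1-R_2|\,\|\nabla T_2\|_{2,\Omega}\,\|\nabla\delta T\|_{2,\Omega}$ is split by Young's inequality so that the $\|\nabla\delta T\|_{2,\Omega}^2$-part is absorbed and the remainder is $C|R_1-R_2|^2\|\nabla T_2\|_{2,\Omega}^2$; integrating over $I$ and using $\|R_1-R_2\|_{L^\infty(I)}\le\sqrt{t_*}\,\|R_1-R_2\|_{H^1(I)}$ (which follows from $(R_1-R_2)(0)=0$) together with $\|\nabla T_2\|_{2,I\times\Omega}^2\le C$ bounds these contributions by $C\|R_1-R_2\|_{H^1(I)}^2$. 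The boundary error term $(\tfrac{1}{R_1}-\tfrac{1}{R_2})\int_\Gamma J(T_2,\rho_2)\delta T\,d\sigma$ is handled analogously, the factor $\|J(T_2,\rho_2)\|_{2,\Gamma}$ being bounded thanks to the pointwise bounds.

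I expect the genuinely delicate term to be the one carrying $\dot R_1-\dot R_2$, namely $\int_\Omega(\tfrac{\dot R_1}{R_1}-\tfrac{\dot R_2}{R_2})(\nabla T_2\cdot x)\delta T\,dx$, because $\dot R_1-\dot R_2$ is only square-integrable in time while $\nabla T_2$ lies only in $L^2(I\times\Omega)$, so their product need not be integrable in time. The trick is to bound this term by $C|\dot R_1-\dot R_2|\,\|\nabla T_2\|_{2,\Omega}\,\|\delta T\|_{2,\Omega}$ and then apply Young's inequality \emph{grouping the factor $\|\nabla T_2\|_{2,\Omega}\|\delta T\|_{2,\Omega}$ together}, which produces $\tfrac12|\dot R_1-\dot R_2|^2 + C\|\nabla T_2\|_{2,\Omega}^2\|\delta T\|_{2,\Omega}^2$. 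The first summand is a forcing term with $\int_I|\dot R_1-\dot R_2|^2\,dt\le\|R_1-R_2\|_{H^1(I)}^2$, while the second has the form $g(t)\|\delta T\|_{2,\Omega}^2$ with $g(t)=C\|\nabla T_2(t)\|_{2,\Omega}^2\in L^1(I)$ by the energy estimate of Proposition~\ref{existence_Rd_given}.

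Collecting everything and adding the $\delta T$ and $\delta\rho$ inequalities, I arrive at a differential inequality $\tfrac{d}{dt}y(t)+c_0\big(\|\nabla\delta T\|_{2,\Omega}^2+\|\nabla\delta\rho\|_{2,\Omega}^2\big)\le g(t)y(t)+F(t)$ for $y:=\|\delta T\|_{2,\Omega}^2+\|\delta\rho\|_{2,\Omega}^2$, where $c_0>0$, $g\in L^1(I)$ with $\int_I g\le C$, $F\in L^1(I)$ with $\int_I F\le C\|R_1-R_2\|_{H^1(I)}^2$, and $y(0)=0$. The generalized Gr\"onwall lemma with an integrable coefficient gives $\sup_{t\in I}y(t)\le C\|R_1-R_2\|_{H^1(I)}^2$, and feeding this estimate back into the retained coercive term before integrating over $I$ yields the bound on $\|\nabla\delta T\|_{2,I\times\Omega}^2+\|\nabla\delta\rho\|_{2,I\times\Omega}^2$, which is the claimed stability estimate~\eqref{Hinf_stability_estimate}. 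The main obstacle is exactly the $\dot R$-difference term just described; once the Young splitting into an $L^1$-forcing plus an $L^1$-in-time Gr\"onwall coefficient is in place, the remainder is a routine repetition of the energy estimates already established.
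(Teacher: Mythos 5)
Your proposal is correct and follows the same overall strategy as the paper: subtract the two weak formulations, test with $\delta T$ and $\delta\rho$, split the coefficients into a diagonal part weighted by $R_1$ plus defect terms carrying $\delta R$, $\delta\dot R$, $\delta v$ (this is exactly the paper's decomposition \eqref{first_decomposition}--\eqref{last_decomposition}), absorb the boundary coupling via the Lipschitz continuity of $\rho_\textit{sat}$ and Lemma~\ref{Lemma:1}, control $\sup_I\delta R^2$ by $t_*\|\delta R\|_{H^1(I)}^2$, and close with Gr\"onwall. The one place where you genuinely diverge is the term carrying $\delta\dot R$, which you correctly single out as the delicate one: the obstruction is that the naive Young splitting produces $\delta\dot R^2\,\|\nabla T_2\|_{2,\Omega}^2$, a product of two functions that are each only $L^1$ in time. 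The paper's remedy is to integrate by parts using $\nabla T_2\cdot x=\nabla\cdot(T_2x)-3T_2$ and $x\cdot n=-1$ on $\Gamma$, so that $\delta\dot R$ is only ever paired with $T_2$ itself, which is bounded pointwise by \eqref{lower_and_upper_bounds}; this keeps the Gr\"onwall coefficient constant and yields the clean defect bound \eqref{defect_estimate} with a pure $\delta\dot R^2$ forcing term. Your remedy instead regroups the Young inequality as $\tfrac12\delta\dot R^2+C\|\nabla T_2\|_{2,\Omega}^2\|\delta T\|_{2,\Omega}^2$ and invokes a generalized Gr\"onwall lemma with the integrable coefficient $g(t)=C\big(1+\|\nabla T_2(t)\|_{2,\Omega}^2\big)$, whose $L^1(I)$-norm is controlled by the energy estimate for $T_2$. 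Both routes are valid and rely on the same a priori information ($\|\nabla T_2\|_{2,I\times\Omega}\le C$ and the pointwise bounds on $T_2$, $\rho_2$); the paper's integration by parts buys a constant-coefficient Gr\"onwall inequality and a defect estimate that is stated and proved separately in the Appendix, whereas your version avoids the divergence theorem at the price of the $L^1$-coefficient Gr\"onwall argument. Either way the conclusion \eqref{Hinf_stability_estimate} follows.
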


\begin{proof}
    Let us first briefly outline the main idea of our proof: After subtracting the weak formulations of both solutions, we sort all defect terms to the right-hand side and estimate them with respect to the difference of both droplet radii. Applying Gr\"onwall's lemma to the resulting inequality then finally implies the desired stability estimate.\\
    Since $R_1, R_2 \in \Sigma_{t_*}$ are bounded away from zero, the corresponding weak solutions $(T_1, \rho_1)$ and $(T_2, \rho_2)$ from Proposition \ref{existence_Rd_given} are well-defined.
    Testing \eqref{Eq:1.2} with the difference $\varphi = T_1 - T_2 \eqqcolon \delta T$ yields two equations for $T_1$ and $T_2$, respectively. Subtracting one from the other implies
    \begin{multline} \label{diff_wf_T12}
        \langle \partial_t \delta T, \delta T \rangle + \int_{\Omega} \nabla \left( \frac{T_1}{R_1^{2}} - \frac{T_2}{R_2^{2}} \right) \cdot \nabla \delta T \, dx
        + \int_{\Omega} \nabla \cdot \left( \frac{T_1 v_1}{R_1} - \frac{T_2 v_2}{R_2} \right) \delta T \, dx\\
        - \int_{\Omega} \nabla \left( \frac{\dot{R}_1 T_1}{R_1} - \frac{\dot{R}_2 T_2}{R_2} \right) \cdot x \delta T \, dx + \int_{\Gamma} \left( \frac{J_1}{R_1} -  \frac{J_2}{R_2} \right) \delta T \, d\sigma = 0
    \end{multline}
    with the evaporation rates $J_i \coloneqq J (T_i, \rho_i)$ for $i \in \{1,2\}$. We define $\delta R \coloneqq R_1 - R_2$, $\delta \dot{R} \coloneqq \dot{R}_1 - \dot{R}_2$, $\delta v \coloneqq v_1 - v_2$, and $\delta J \coloneqq J_1 - J_2$ for the ease of presentation. Inserting the following decompositions
    \begin{equation} \label{first_decomposition}
        \frac{T_1}{R_1^2} - \frac{T_2}{R_2^2} = \frac{\delta T}{R_1^2} - \delta R \frac{R_1 + R_2}{R_1 R_2} T_2,
    \end{equation}
    \begin{equation}
        \frac{T_1 v_1}{R_1} - \frac{T_2 v_2}{R_2} = \frac{\delta T v_1}{R_1} + \left( \frac{\delta v}{R_1} - \frac{\delta R v_2}{R_1 R_2} \right) T_2,
    \end{equation}
    \begin{equation}
        \frac{\dot{R}_1 T_1}{R_1} - \frac{\dot{R}_2 T_2}{R_2} = \frac{\dot{R}_1}{R_1} \delta T + \left( \frac{\delta \dot{R}}{R_1} - \frac{\delta R \dot{R}_2}{R_1 R_2} \right) T_2,
    \end{equation}
    \begin{equation} \label{last_decomposition}
        \frac{J_1}{R_1} -  \frac{J_2}{R_2} = \frac{\delta J}{R_1} -  \frac{\delta R J_2}{R_1 R_2}
    \end{equation}
    into \eqref{diff_wf_T12} and sorting the resulting terms yields
    \begin{equation}\label{energy_est_Tg}
        \frac{1}{2} \frac{d}{dt} \| \delta T \|_{2,\Omega}^2 + \frac{1}{R_1^2} \| \nabla \delta T \|_{2,\Omega}^2 = \frac{\dot{R}_1}{R_1} \int_{\Omega} (\nabla \delta T \cdot x) \delta T \, dx - \frac{1}{R_1} \int_{\Gamma} \delta J \delta T \, d\sigma + d_T
    \end{equation}
    where the temperature defect
    \begin{multline} \label{defect_T}
        d_T \coloneqq \frac{1}{R_1 R_2} \int_{\Omega} \nabla T_2 \cdot \big( \delta R v_2 - R_2 \delta v + \delta \dot{R} R_2 x - \delta R \dot{R}_2 x \big) \delta T \, dx + \frac{\delta R}{R_1 R_2} \int_{\Gamma} J_2 \delta T \, d\sigma \\+ \delta R \frac{R_1 + R_2}{R_1 R_2} \int_{\Omega} \nabla T_2 \cdot \nabla \delta T \, dx
    \end{multline}
    contains all products with $\delta R$, $\delta \dot{R}$, or $\delta v$. Notice that the integral of $(\nabla \delta T \cdot v_2) \delta T$ over~$\Omega$ vanishes since $v_2$ is divergence-free and tangential at the droplet surface. Young's inequality implies
    \begin{equation} \label{Young_grad}
        \frac{\dot{R}_1}{R_1} \int_{\Omega} (\nabla \delta T \cdot x) \delta T \, dx \leq \frac{1}{2 R_1^2} \| \nabla \delta T \|_{2,\Omega}^2 + C \| \delta T \|_{2,\Omega}^2
    \end{equation}
    taking into account $|\dot{R}_1| \leq J_\infty$ and $|\nabla \delta T \cdot x| \leq C |\nabla \delta T|$ due to $\Omega$ being bounded. Inserting \eqref{Young_grad} into \eqref{energy_est_Tg} allows us to absorb the $L^2$-norm of the gradient. By that, we obtain
    \begin{equation} \label{diff_T}
        \frac{d}{dt} \| \delta T \|_{2,\Omega}^2 + \frac{1}{R_1^2} \| \nabla \delta T \|_{2,\Omega}^2
        \leq C \| \delta T \|_{2,\Omega}^2 - \frac{2}{R_1} \int_{\Gamma} \delta J \delta T \, d\sigma + 2 d_T
    \end{equation}
    for a.e. $t \in I$. 
    
    Likewise, testing \eqref{Eq:1.1} with $\varphi = \rho_1 - \rho_2 \eqqcolon \delta \rho$ yields
    \begin{equation} \label{diff_rho}
        \frac{d}{dt} \| \delta \rho \|_{2,\Omega}^2 + \frac{1}{R_1^2} \| \nabla \delta \rho \|_{2,\Omega}^2
        \leq C \| \delta \rho \|_{2,\Omega}^2 + \frac{2}{R_1} \int_{\Gamma} \delta J \delta \rho \, d\sigma + 2 d_\rho
    \end{equation}
    with the vapor mass defect $d_\rho$ being defined by
    \begin{multline} \label{defect_rho}
        d_\rho \coloneqq \frac{1}{R_1 R_2} \int_{\Omega} \nabla \rho_2 \cdot \big( \delta R v_2 - R_2 \delta v + \delta \dot{R} R_2 x - \delta R \dot{R}_2 x \big) \delta \rho \, dx - \frac{\delta R}{R_1 R_2} \int_{\Gamma} J_2 \delta \rho \, d\sigma\\
        + \delta R \frac{R_1 + R_2}{R_1 R_2} \int_{\Omega} \nabla \rho_2 \cdot \nabla \delta \rho \, dx
    \end{multline}
    applying the same decompositions \eqref{first_decomposition}--\eqref{last_decomposition} as before. Adding \eqref{diff_T} and \eqref{diff_rho} implies
    \begin{multline} \label{energy_estimate}
        \frac{d}{dt} \big( \| \delta T \|_{2,\Omega}^2 + \| \delta \rho \|_{2,\Omega}^2 \big) + \frac{1}{R_1^2} \big( \| \nabla \delta T \|_{2,\Omega}^2 + \| \nabla \delta \rho \|_{2,\Omega}^2 \big)
        \leq C \big( \| \delta T \|_{2,\Omega}^2 + \| \delta \rho \|_{2,\Omega}^2 \big)\\
        + \frac{2}{R_1} \int_{\Gamma} |\delta J | |\delta T - \delta \rho| \, d\sigma + d
    \end{multline}
    where $d \coloneqq 2 (d_T + d_\rho)$ denotes the total defect. The rough estimate
    \begin{equation}
        2 \int_{\Gamma} |\delta J | |\delta T - \delta \rho| \, d\sigma \leq C \big( \| \delta T \|_{2,\Gamma}^2 + \| \delta \rho \|_{2,\Gamma}^2 \big)
    \end{equation}
    follows from $\rho_\textit{sat}$ being Lipschitz continuous. Together with the interpolation inequality \eqref{Ehlring_inequality} we obtain
    \begin{equation} \label{Jw_estimate}
        \frac{2}{R_1} \int_{\Gamma} |\delta J | |\delta T - \delta \rho| \, d\sigma \leq \frac{1}{2 R_1^2} \left( \| \nabla \delta T \|_{2,\Omega}^2 + \| \nabla \delta \rho \|_{2,\Omega}^2 \right) + C \left( \| \delta T \|_{2,\Omega}^2 + \| \delta \rho \|_{2,\Omega}^2 \right) 
    \end{equation}
    which allows us to absorb the $L^2$-norms of the gradients. Therefore, inserting \eqref{Jw_estimate} into \eqref{energy_estimate} implies
    \begin{equation}\label{energy_estimate_with_defect}
        \frac{d}{dt} \big( \| \delta T \|_{2,\Omega}^2 + \| \delta \rho \|_{2,\Omega}^2 \big) + \frac{1}{2 R_1^2} \big( \| \nabla \delta T \|_{2,\Omega}^2 + \| \nabla \delta \rho \|_{2,\Omega}^2 \big) \leq C \big( \| \delta T \|_{2,\Omega}^2 + \| \delta \rho \|_{2,\Omega}^2 \big) + d
    \end{equation}
    for a.e. $t \in I$. As carried out in the Appendix, the total defect is bounded by
    \begin{equation} \label{defect_estimate}
        |d| \leq \frac{1}{4 R_1^2} \big( \| \nabla \delta T \|_{2, \Omega}^2 + \| \nabla \delta \rho \|_{2, \Omega}^2 \big) + C \Big[ \| \delta T \|_{2, \Omega}^2 + \| \delta \rho \|_{2, \Omega}^2 + \big( 1 + \| \nabla T_2 \|_{2, \Omega}^2 + \| \nabla \rho_2 \|_{2, \Omega}^2 \big) \delta R^2 + \delta \dot{R}^2 \Big]
    \end{equation}
    which directly follows from the repeated application of H\"older's and Young's inequalities together with the interpolation inequality from Lemma \ref{Lemma:1}. After combining \eqref{energy_estimate_with_defect} and \eqref{defect_estimate}, we absorb the $L^2$-norms of the gradients. By that, we obtain
    \begin{multline}
        \frac{d}{dt} \big( \| \delta T \|_{2,\Omega}^2 + \| \delta \rho \|_{2,\Omega}^2 \big) + \| \nabla \delta T \|_{2,\Omega}^2 + \| \nabla \delta \rho \|_{2,\Omega}^2 \leq C \Big[ \| \delta T \|_{2, \Omega}^2 + \| \delta \rho \|_{2, \Omega}^2\\
        + \left( 1 + \| \nabla T_2 \|_{2, \Omega}^2 + \| \nabla \rho_2 \|_{2, \Omega}^2 \right) \delta R^2 + \delta \dot{R}^2 \Big]
    \end{multline}
    since $R_1 \geq \eta_1$ is bounded away from zero. Integration over time implies
    \begin{equation}
        E(t) \leq C \int_0^t E(\tau) \, d\tau + D(t)
    \end{equation}
    with the energy
    \begin{equation}
        E (t) \coloneqq \| \delta T (t) \|_{2,\Omega}^2 + \| \delta \rho (t) \|_{2,\Omega}^2 + \int_0^t \| \nabla \delta T \|_{2,\Omega}^2 + \| \nabla \delta \rho \|_{2,\Omega}^2 \, d\tau
    \end{equation}
    and the integrated total defect
    \begin{equation}\label{integrated_total_defect}
        D(t) \coloneqq C \int_0^t \left( 1 + \| \nabla T_2 \|_{2, \Omega}^2 + \| \nabla \rho_2 \|_{2, \Omega}^2 \right) \delta R^2 + \delta \dot{R}^2 \, d\tau
    \end{equation}
    for $t \in I$.
    We are now in the position to apply Gr\"onwall's lemma. The latter implies
    \begin{equation} \label{Gronwall}
        E(t) \leq \exp \left( C t \right) D(t)
    \end{equation}
    since $D$ is non-decreasing. From H\"older's inequality we infer the $L^\infty$-estimate
    \begin{equation} \label{estimate_sup_delta_R}
        \sup_{t \in I} \delta R^2 \leq \left( \int_I |\delta \dot{R}| \, d\tau \right)^2 \leq t_* \| \delta \dot{R} \|_{2,I}^2 \leq t_* \| \delta R \|_{H^1(I)}^2
    \end{equation}
    since $\delta R (0) = 0$ by definition. Inserting \eqref{estimate_sup_delta_R} into \eqref{integrated_total_defect} yields the desired upper bound
    \begin{equation} \label{first_defect_estimate}
        D(t) \leq C \big[ 1 + t_* \left( t + \| \nabla T_2 \|_{2, I\times\Omega}^2 + \| \nabla \rho_2 \|_{2, I\times\Omega}^2 \right) \big] \| \delta R \|_{H^1(I)}^2 \leq C \| \delta R \|_{H^1(I)}^2
    \end{equation}
    for the integrated total defect. Therefore, combining \eqref{Gronwall} and \eqref{first_defect_estimate} concludes our proof.
\end{proof}

Recall from the beginning of this section that finding a weak solution in the sense of Definition \ref{Def:weak_solution} can be reformulated as a fixed-point problem. In the following, we finally apply Banach's fixed-point theorem to prove the short time existence of a unique weak solution to our droplet evaporation problem.

\begin{theorem} \label{theorem_short_time_existence}
    Let the initial values $T_0, \rho_0 \in L^2(\Omega)$ be bounded by \eqref{initial_lower_and_upper_bounds} a.e. in $\Omega$. Then, the Volterra operator $\mathcal{T}: \Sigma_{t_*} \to \Sigma_{t_*}$ defined by \eqref{volterra} is a contraction with respect to the $H^1$-norm if the underlying time interval $I = [0, t_*)$ is sufficiently small. Consequently, the droplet evaporation problem from Section~\ref{sec_model} admits a unique weak solution $(R, T, \rho)$ on $I$ in the sense of Definition \ref{Def:weak_solution}.
\end{theorem}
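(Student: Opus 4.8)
The plan is to verify the three hypotheses of Banach's fixed-point theorem for $\mathcal{T}$ on the metric space $\big(\Sigma_{t_*}, \|\cdot\|_{H^1(I)}\big)$ and then to identify the resulting fixed point with the radius of a weak solution. First I would record that $\Sigma_{t_*}$ is a nonempty, closed (hence complete) subset of $H^1(I)$: the constraint $R(0)=R_0$ is preserved under $H^1(I)\hookrightarrow C(\overline I)$ convergence, and the a.e.\ bound $|\dot R|\le J(T_\infty,\rho_\infty)$ defines a closed convex set in $L^2(I)$. Next, the operator must be well defined. For any $R\in\Sigma_{t_*}$ one has $R(t)\ge R_0-J(T_\infty,\rho_\infty)\,t$, so choosing $t_*\le R_0/\big(2J(T_\infty,\rho_\infty)\big)$ guarantees the \emph{uniform} lower bound $R(t)\ge \eta:=R_0/2>0$. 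Consequently Proposition \ref{existence_Rd_given} applies with this fixed $\eta$, the solution operator $S(R)=(T,\rho)$ exists and obeys the pointwise bounds \eqref{lower_and_upper_bounds}, so $\mathcal{T}(R)$ is meaningful.

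To show $\mathcal{T}(\Sigma_{t_*})\subseteq\Sigma_{t_*}$, I would exploit exactly those pointwise bounds. Since $\rho_\textit{sat}$ is nondecreasing and (A5) gives $\rho_\textit{sat}(T_*)=\rho_\infty$ and $\rho_\textit{sat}(T_\infty)=\rho_*$, the bounds $T_*\le T\le T_\infty$ and $\rho_\infty\le\rho\le\rho_*$ yield $|J(T,\rho)|=|\rho_\textit{sat}(T)-\rho|\le \rho_*-\rho_\infty=J(T_\infty,\rho_\infty)$ a.e.\ on $I\times\Gamma$. Because $\Gamma$ is the unit sphere with $|\Gamma|=4\pi$, it follows that $\big|\tfrac{d}{dt}\mathcal{T}(R)\big|=\tfrac1{4\pi}\big|\int_\Gamma J\,d\sigma\big|\le J(T_\infty,\rho_\infty)$, while $\mathcal{T}(R)(0)=R_0$ by construction; hence $\mathcal{T}(R)\in\Sigma_{t_*}$.

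The heart of the proof is the contraction estimate. For $R_1,R_2\in\Sigma_{t_*}$ write $g:=\mathcal{T}(R_1)-\mathcal{T}(R_2)$ and $(T_i,\rho_i):=S(R_i)$, so that $g(0)=0$ and $g'(t)=-\tfrac1{4\pi}\int_\Gamma\big(J(T_1,\rho_1)-J(T_2,\rho_2)\big)\,d\sigma$. Lipschitz continuity of $\rho_\textit{sat}$ and Cauchy--Schwarz give $|g'(t)|^2\le C\big(\|\delta T\|_{2,\Gamma}^2+\|\delta\rho\|_{2,\Gamma}^2\big)$ with $\delta T:=T_1-T_2$, $\delta\rho:=\rho_1-\rho_2$, both lying in $X$ since they vanish on $\Gamma_\infty$. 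Integrating over $I$ and invoking \eqref{Ehlring_inequality} with parameter $\varepsilon\in(0,1)$ produces
\[
\|g'\|_{2,I}^2\le C\varepsilon\big(\|\nabla\delta T\|_{2,I\times\Omega}^2+\|\nabla\delta\rho\|_{2,I\times\Omega}^2\big)+\frac{C}{\varepsilon}\big(\|\delta T\|_{2,I\times\Omega}^2+\|\delta\rho\|_{2,I\times\Omega}^2\big).
\]
Here the stability estimate \eqref{Hinf_stability_estimate} of Proposition \ref{H1_stability} controls the gradient terms by $C\|\delta R\|_{H^1(I)}^2$, while the $L^2$-in-time terms are bounded by $Ct_*\sup_I(\cdots)\le Ct_*\|\delta R\|_{H^1(I)}^2$; crucially the stability constant is uniform for all $t_*$ below the fixed threshold, because $\eta=R_0/2$ is fixed. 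Since $g(0)=0$, the Poincaré bound $\|g\|_{2,I}^2\le t_*^2\|g'\|_{2,I}^2$ then gives
\[
\|g\|_{H^1(I)}^2\le C(1+t_*^2)\Big(\varepsilon+\frac{t_*}{\varepsilon}\Big)\|\delta R\|_{H^1(I)}^2.
\]

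The main obstacle, and the reason the two smallness parameters must be introduced in a fixed order, is that the trace term $\|\delta T\|_{2,\Gamma}$ cannot be controlled by $L^2(\Omega)$-data alone: the interpolation inequality necessarily reintroduces $\|\nabla\delta T\|_{2,\Omega}$, whose $t_*$-integral is bounded by $\|\delta R\|_{H^1(I)}^2$ \emph{without} any decay in $t_*$. This is resolved by first fixing $\varepsilon$ small enough that $C(1+t_*^2)\varepsilon\le\tfrac14$ (legitimate, since for $t_*\le 1$ the prefactor is bounded independently of $\varepsilon$), and only then shrinking $t_*$ so that $C(1+t_*^2)t_*/\varepsilon\le\tfrac14$, which yields a contraction factor $q^2\le\tfrac12<1$. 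Banach's theorem then provides a unique $R\in\Sigma_{t_*}$ with $\mathcal{T}(R)=R$, and $(R,S(R))$ is a weak solution by the equivalence noted after \eqref{volterra}. For uniqueness among \emph{all} weak solutions I would observe that any weak solution satisfies $|\dot R|\le J(T_\infty,\rho_\infty)$ by the same bound on $J$, hence $R\in\Sigma_{t_*}$, is bounded below by $R_0/2$, obeys $(T,\rho)=S(R)$ by Proposition \ref{existence_Rd_given}, and therefore $R=\mathcal{T}(R)$; the fixed point being unique, so is the weak solution.
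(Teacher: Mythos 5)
Your proposal is correct and follows essentially the same route as the paper: the uniform lower bound $R\ge R_0/2$ from $t_*<R_0/(2J_\infty)$, the contraction estimate via the Lipschitz bound on $\delta J$, the interpolation inequality of Lemma \ref{Lemma:1}, the stability estimate of Proposition \ref{H1_stability}, and the choice of $\varepsilon$ before $t_*$ all match the paper's argument. You additionally spell out the completeness of $\Sigma_{t_*}$, the self-mapping property $\mathcal{T}(\Sigma_{t_*})\subseteq\Sigma_{t_*}$, and uniqueness among all weak solutions, which the paper leaves implicit; these are correct and welcome refinements rather than a different approach.
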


\begin{proof}
    Assume $t_* < R_0 / (2 J_\infty)$ such that the droplet radii in $\Sigma_{t_*}$ are uniformly bounded away from zero. Now let $R_1, R_2 \in \Sigma_{t_*}$ and define $\delta \mathcal{T} \coloneqq \mathcal{T}(R_1) - \mathcal{T}(R_2)$. From $\delta \mathcal{T} (0) = 0$ we infer
        \begin{equation}
            \| \delta \mathcal{T} \|^2_{H^1(I)} = \| \delta \mathcal{T} \|^2_{2,I} + \| \delta \dot{\mathcal{T}} \|^2_{2,I} \leq \left( 1 + \int_I t \, dt \right) \| \delta \dot{\mathcal{T}} \|_{2,I}^2 = \left( 1 + \frac{t_*^2}{2} \right) \| \delta \dot{\mathcal{T}} \|_{2,I}^2
        \end{equation}
    taking into account $|\delta \mathcal{T} (t)| \leq \sqrt{t} \| \delta \dot{\mathcal{T}} \|_{2,I}$ due to H\"older's inequality. The latter further implies
    \begin{equation}
        \| \delta \dot{\mathcal{T}} \|_{2,I}^2 = 
        \int_I \left( \frac{1}{4 \pi} \int_\Gamma \delta J \, d\sigma \right)^2 d\tau
        \leq \frac{1}{4 \pi} \int_I \int_\Gamma \delta J^2 \, d\sigma d\tau
    \end{equation}
    since $|\Gamma| = 4 \pi$. Recall that the rough estimate $\delta J^2 \leq C (\delta T^2 + \delta \rho^2)$ follows from Young's inequality and~$\rho_\textit{sat}$ being Lipschitz continuous. Together with Lemma \ref{Lemma:1} we thus obtain
    \begin{equation}
    \begin{gathered}
        \| \delta \dot{\mathcal{T}} \|_{2,I}^2 \leq C \int_I \varepsilon \big( \| \nabla \delta T \|_{2, \Omega}^2 + \| \nabla \delta \rho \|_{2, \Omega}^2 \big) + \frac{1}{\varepsilon} \big( \| \delta T \|_{2, \Omega}^2 + \| \delta \rho \|_{2, \Omega}^2 \big) \,d\tau\\
        \leq C \bigg[ \varepsilon \big( \| \nabla \delta T \|_{2, I\times\Omega}^2 + \| \nabla \delta \rho \|_{2, I\times\Omega}^2 \big) + \frac{t_*}{\varepsilon} \sup_{t \in I} \big( \| \delta T \|_{2, \Omega}^2 + \| \delta \rho \|_{2, \Omega}^2 \big) \bigg]
    \end{gathered}
    \end{equation}
    which finally makes Proposition \ref{H1_stability} applicable. By that, we arrive at the desired inequality
    \begin{equation}
        \| \delta \mathcal{T} \|^2_{H^1(I)} \leq C \bigg( \varepsilon + \frac{t_*}{\varepsilon} \bigg) \| \delta R \|_{H^1(I)}^2
    \end{equation}
    whose Lipschitz constant becomes strictly smaller than one if $\varepsilon$ and $t_*$ are chosen accordingly. For instance, $\varepsilon = 1 / (2 C)$ and $t_* < 1 / (2 C)^2$ would be one possible choice. Hence, the operator $\mathcal{T}$ turns out to be a contraction on $\Sigma_{t_*}$ if $t_* > 0$ is sufficiently small. The short time existence of a unique weak solution $(R, T, \rho)$ then follows from Banach's fixed-point theorem.
\end{proof}

\subsection{Maximal time interval of existence} \label{sec_maximal_time_interval}

The weak solution $(R, T, \rho)$ of the coupled evaporation problem is said to be maximal if it cannot be extended on a larger time interval. The following theorem shows that the maximal time interval can only be finite if the droplet evaporates completely.

\begin{theorem} \label{theorem_maximal_existence}
Let the initial values $T_0, \rho_0 \in L^2(\Omega)$ be bounded by \eqref{initial_lower_and_upper_bounds} a.e. in $\Omega$. Then, the droplet evaporation problem admits a unique weak solution $(R, T, \rho)$ in the sense of Definition \ref{Def:weak_solution} whose time interval $I_\textit{max} \coloneqq [0, t_\textit{max})$ is maximal. If $t_\textit{max} < \infty$, then the droplet evaporates completely meaning that
\begin{equation} \label{complete_evaporation}
    \lim_{t \to t^-_{max}} R (t) = 0
\end{equation}
where $t \to t^-_{max}$ denotes the one-sided limit from below.
\end{theorem}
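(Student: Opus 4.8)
The plan is to construct the maximal solution by patching together the local solutions supplied by Theorem \ref{theorem_short_time_existence} and then to prove the blow-up alternative \eqref{complete_evaporation} by a continuation-and-contradiction argument. First I would set
$t_\textit{max} \coloneqq \sup \{ t_* > 0 : \text{the droplet evaporation problem admits a weak solution on } [0, t_*) \}$,
which is strictly positive by Theorem \ref{theorem_short_time_existence}. Given two weak solutions on overlapping intervals, the uniqueness statements of Proposition \ref{existence_Rd_given} and Theorem \ref{theorem_short_time_existence}, applied on the smaller interval, force them to coincide there. Hence the family of local solutions is consistent and glues to a single weak solution $(R, T, \rho)$ on $I_\textit{max} = [0, t_\textit{max})$, which inherits uniqueness and, by Proposition \ref{existence_Rd_given}, the pointwise bounds \eqref{lower_and_upper_bounds}. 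This yields the unique maximal solution asserted in the statement.

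It remains to prove the continuation criterion under the hypothesis $t_\textit{max} < \infty$. The fixed-point construction keeps $R$ admissible, so $|\dot{R}(t)| \leq J(T_\infty, \rho_\infty) \eqqcolon J_\infty$ for a.e. $t$, and $R$ is therefore Lipschitz on $[0, t_\textit{max})$. Since $t_\textit{max} < \infty$, the Cauchy criterion gives a limit $R_\textit{max} \coloneqq \lim_{t \to t_\textit{max}^-} R(t) \geq 0$, because $R(t) > 0$ on the whole interval. I argue by contradiction and assume $R_\textit{max} > 0$. As $T, \rho \in L^2(I_\textit{max}, H^1(\Omega))$ with $\partial_t T, \partial_t \rho \in L^2(I_\textit{max}, X^*)$, the standard embedding into $C(\overline{I}; L^2(\Omega))$ provides well-defined traces $T(t_0), \rho(t_0) \in L^2(\Omega)$ for every $t_0 < t_\textit{max}$; and since the box $[T_*, T_\infty] \times [\rho_\infty, \rho_*]$ is closed and convex in $L^2(\Omega)$, these traces again satisfy \eqref{initial_lower_and_upper_bounds}. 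Fixing $t_0$ so close to $t_\textit{max}$ that $R(t) \geq R_\textit{max}/2$ for all $t \in [t_0, t_\textit{max})$, I restart the (autonomous) problem at $t_0$ with admissible initial data $(R(t_0), T(t_0), \rho(t_0))$ and invoke Theorem \ref{theorem_short_time_existence} to obtain a weak solution on $[t_0, t_0 + \delta)$.

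The crucial point is that the existence time $\delta$ can be chosen uniformly in $t_0$. Tracing the dependence of the constants, $\delta$ is constrained only by the admissibility conditions $\delta < R(t_0)/(2 J_\infty)$ and $\delta < (2C)^{-2}$, where $C$ is the Lipschitz constant of Proposition \ref{H1_stability}; this $C$ depends solely on the lower bound of the radius and the fixed data, not on the particular initial profiles. The uniform lower bound $R(t_0) \geq R_\textit{max}/2$ therefore yields a single $\delta_0 > 0$ valid for every admissible restart time. Choosing $t_0 > t_\textit{max} - \delta_0$ and concatenating the restarted solution with $(R, T, \rho)$ restricted to $[0, t_0]$ produces a weak solution on $[0, t_0 + \delta_0) \supsetneq [0, t_\textit{max})$; the concatenation is again a weak solution because both pieces lie in $C(L^2)$, share the trace at $t_0$, and thus join without a singular contribution to the distributional time derivative. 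This contradicts the maximality of $t_\textit{max}$, so $R_\textit{max} = 0$, which is precisely \eqref{complete_evaporation}.

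I expect the uniform-$\delta$ step to be the main obstacle: everything hinges on the existence time in Theorem \ref{theorem_short_time_existence} being independent of the initial data $T_0, \rho_0$ once the radius is bounded below by a fixed $\eta > 0$. The remaining ingredients — gluing weak parabolic solutions at a matching $L^2$-trace and preserving the box constraints under the $C(\overline{I}; L^2)$ limit — are routine, and the monotone Lipschitz control of $R$ makes the limit $R_\textit{max}$ immediate.
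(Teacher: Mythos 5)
Your proposal is correct and follows the same overall strategy as the paper: a continuation-by-contradiction argument showing that if $R_\textit{max}>0$ the solution can be extended past $t_\textit{max}$. The one technical difference lies in \emph{where} you restart. The paper restarts exactly at $t_\textit{max}$: it invokes the Lions--Magenes lemma to obtain limits $T_\textit{max},\rho_\textit{max}\in L^2(\Omega)$ of $T(t),\rho(t)$ as $t\to t_\textit{max}^-$, checks via Proposition \ref{existence_Rd_given} that these limits still satisfy the box constraints \eqref{initial_lower_and_upper_bounds}, and then applies Theorem \ref{theorem_short_time_existence} with initial data $(R_\textit{max},T_\textit{max},\rho_\textit{max})$. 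You instead restart at an interior time $t_0<t_\textit{max}$ and carry the burden of showing that the local existence time $\delta$ is uniform as $t_0\to t_\textit{max}^-$. Your justification of that uniformity is sound: the constraints on $t_*$ in Theorem \ref{theorem_short_time_existence} involve only $R(t_0)/(2J_\infty)$ and the contraction constant from Proposition \ref{H1_stability}, which depends on the lower bound of the radius and, through the energy estimates, on the initial data only via the uniform box bounds \eqref{initial_lower_and_upper_bounds} --- so a single $\delta_0$ works once $R(t_0)\geq R_\textit{max}/2$. The paper's route is marginally shorter because it sidesteps the uniform-$\delta$ bookkeeping entirely; your route avoids taking the trace at the endpoint $t_\textit{max}$ itself, though the same $C(\overline{I};L^2)$ embedding is needed in both cases, so nothing is really saved. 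Your explicit gluing of local solutions via uniqueness to define $t_\textit{max}$ as a supremum is also slightly more careful than the paper, which takes the existence of a maximal solution largely for granted.
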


\begin{proof}
First of all, it should be mentioned that \eqref{complete_evaporation} is well-defined since the Sobolev embedding $H^1(I_\textit{max})\xhookrightarrow{} C^0([0,t_\textit{max}])$ ensures the required continuity of the droplet radius.
Now assume that 
\begin{equation}
    R_\textit{max} \coloneqq \lim_{t \to  t^-_\textit{max}} R (t) > 0
\end{equation}
is strictly positive. According to the Lions--Magenes lemma, there exist $T_\textit{max}, \rho_\textit{max} \in L^2(\Omega)$ such that
\begin{equation}
    \lim_{t \to t^-_\textit{max}} \| T (t) - T_\textit{max} \|_{2,\Omega} = 0 \text{\qquad and \qquad} \lim_{t \to t^-_\textit{max}} \| \rho (t) - \rho_\textit{max} \|_{2,\Omega} = 0.
\end{equation}
According to Proposition \ref{existence_Rd_given}, we have $\rho_\infty \leq \rho_\textit{max} \leq \rho_*$ and $T_* \leq T_\textit{max} \leq T_\infty$ a.e. in $\Omega$. Therefore, the limits $(R_\textit{max}, T_\textit{max}, \rho_\textit{max})$ are admissible initial values for the droplet evaporation problem. Theorem \ref{theorem_short_time_existence} thus yields a unique weak solution on $[t_\textit{max}, t_\textit{max} + \varepsilon)$ for some $\varepsilon > 0$. Concatenating the weak solutions on
$I_\textit{max}$ and $[t_\textit{max}, t_\textit{max} + \varepsilon)$ defines a weak solution on the extended time interval $[0, t_\textit{max} + \varepsilon)$ which contradicts $I_\textit{max}$ being maximal. Consequently, $R_\textit{max}$ has to be zero.
\end{proof}

\section{Numerical examples} \label{sec_numerical_examples}

The following numerical examples illuminate how different air flows around a single spherical droplet affect its evaporation process. For the ease of presentation, we always consider a $1\,\mu$l water droplet evaporating at $T_\infty = 60\,$\textdegree C and $\text{RH}_\infty = 10\,$\% where $\text{RH}_\infty \coloneqq \rho_\infty / \rho_\textit{sat}(T_\infty)$ denotes the relative humidity of the drying air. Other drying conditions are studied in \cite{baensch18, doss22, doss23} with similar direct numerical simulations. From \cite{doss23} we also adopt the physical parameters of our model.\\
The considered air flows will be rotationally symmetric in vertical direction. Therefore, transforming our single droplet evaporation model into spherical coordinates allows us to neglect the azimuth. Let $\theta \in [0, \pi]$ be the polar angle and $r \geq 0$ the radial distance to the droplet center. Taking into account the aforementioned symmetry, all simulations are performed in the computational domain
\begin{equation}
    \Omega = \big\{ (\theta, r) \in \mathbb{R}^2 \, : \, 0 \leq \theta \leq \pi,\, 1 \leq r \leq 50 \big\}
\end{equation}
representing the two-dimensional generatrix of the rescaled gas phase. We use Gmsh (version 4.4.1) to generate a structured mesh for $\Omega$ consisting of 58\,882 triangles. As shown in Figure \ref{fig:mesh},
\begin{figure}
    \centering
    \includegraphics[width=0.6\textwidth]{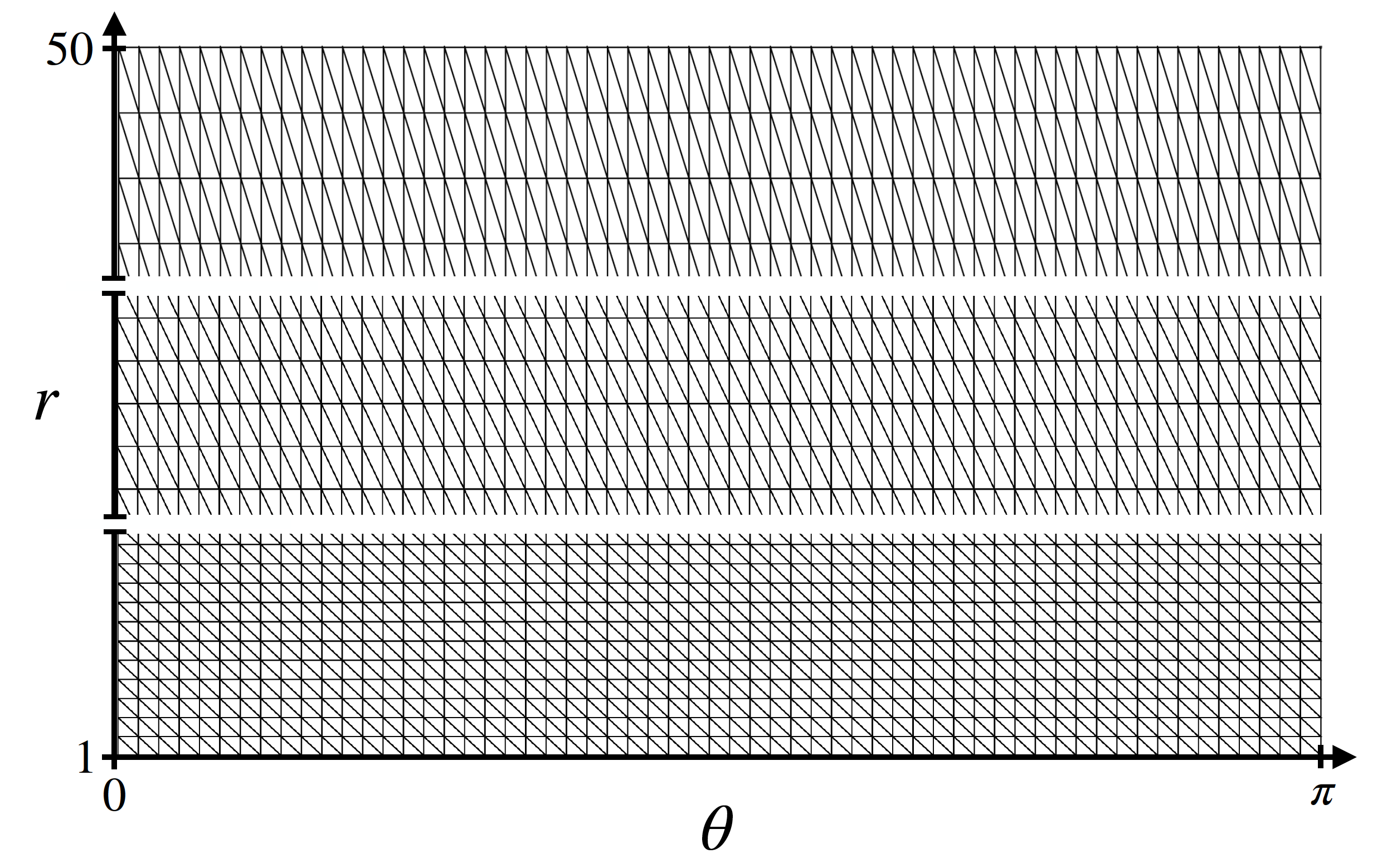}
    \caption{Mesh refinement towards the droplet. Due to the large aspect ratio of our computational domain, only three sections of the mesh are shown. The $r$-axis is interrupted accordingly.}
    \label{fig:mesh}
\end{figure}
their resolution increases towards the droplet surface. To be more specific, the triangles are successively stretched by~$0.25\,$\% in each horizontal layer starting from the droplet surface. We then apply the finite element method (FEM) using a semi-implicit Euler scheme to compute the temperature and vapor mass distributions in a monolithic manner. Apart from the droplet radius, all unknowns are treated implicitly and Newton's method is applied in each time step. Regarding the accuracy of our numerical results, the uniform time step size $\Delta t = 1\,$s was found to be sufficiently small.  Second-order Lagrange elements serve as basis functions for our FEM simulations which are implemented in \texttt{C++} with the Distributed and Unified Numerics Environment DUNE \cite{sander20, bastian21, engwer17, engwer18}.  The corresponding linear systems are assembled automatically by the module dune-fufem\footnote{Module description of dune-fufem: \url{https://www.dune-project.org/modules/dune-fufem/}} from our weak formulations after being linearized and transformed into spherical coordinates. We apply UMFPACK \cite{davis04} as a direct solver to compute the resulting $237\,762$ degrees of freedom. Our numerical results are finally visualized with ParaView (version 5.7.0) after being transformed back into Euclidean coordinates. The resulting images in Figures \ref{fig:air_flows}, \ref{fig:heat_and_mass_transfer_Stokes_flow}, and \ref{fig:heat_and_mass_transfer_acoustic_streaming} do not show the whole computational domain, but only the interesting area near the droplet.

\subsection{Stokes flow} \label{sec_Stokes_flow}

We first consider the evaporation of a spherical droplet exposed to a laminar air flow in the ambient gas phase. The latter is described by the following analytical solution of the Stokes equations \cite{landau87}
\begin{equation} \label{Stokes_flow}
\begin{aligned}
    v_\theta &= -V_\infty \sin \theta \left( 1 - \frac{1}{4 r^3} - \frac{3}{4r} \right), \\
    v_r &= V_\infty \cos \theta \left( 1 + \frac{1}{2 r^3} - \frac{3}{2r} \right)
\end{aligned}
\end{equation}
where $v_\theta$ and $v_r$ denote the velocity components in polar and radial direction, respectively. The flow parameter $V_\infty \geq 0$ determines the ambient velocity far away from the droplet. Notice that the required Lipschitz continuity \eqref{air_flow_continuity} holds since \eqref{Stokes_flow} does not even depend on the droplet radius.\\
Figure \ref{fig:Stokes_flow}
\begin{figure}
    \centering
    \begin{subfigure}[b]{0.48\textwidth}
         \centering
         \includegraphics[height=150pt]{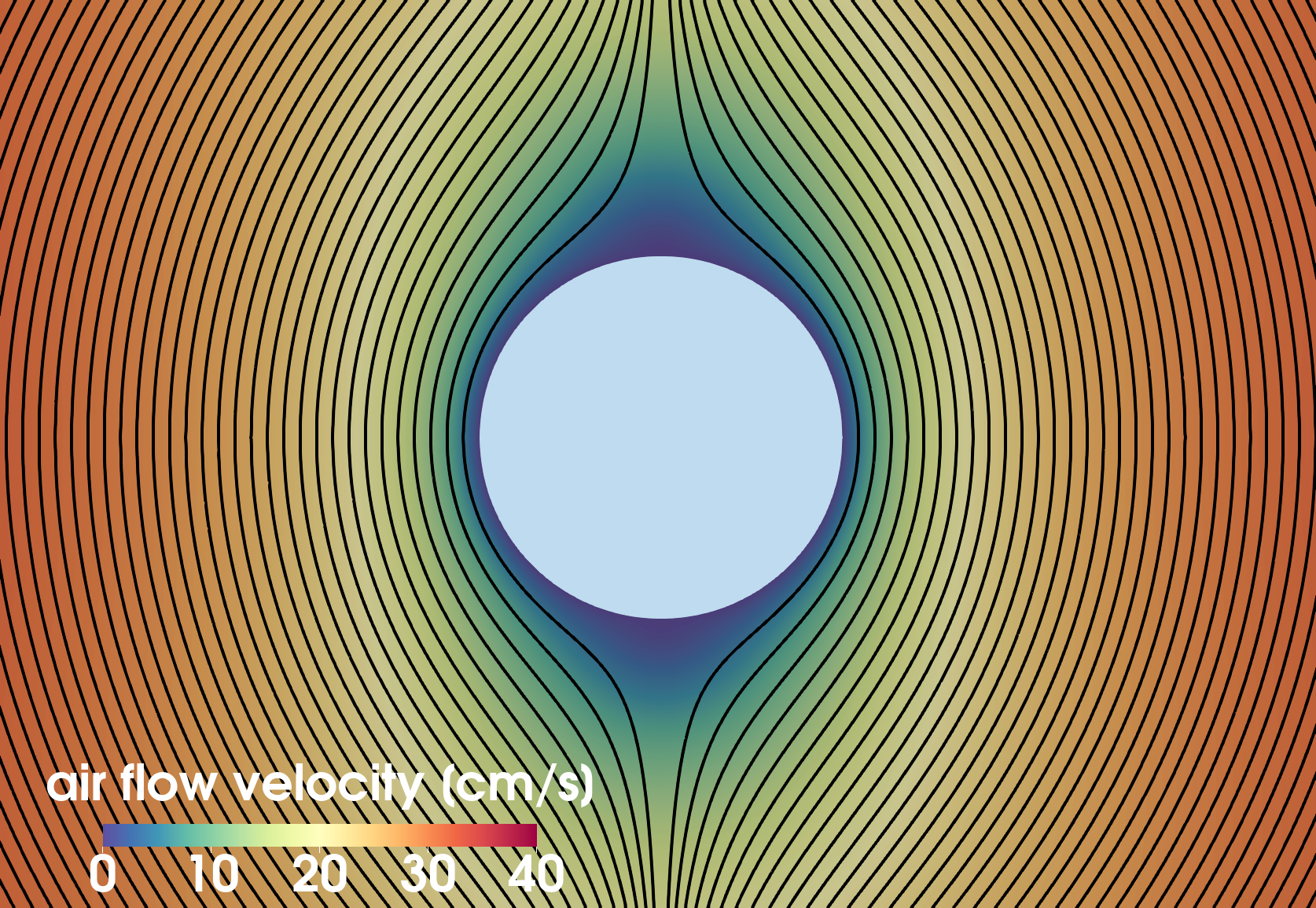}
         \caption{Stokes flow at $V_\infty = 40\,$cm/s.}
         \label{fig:Stokes_flow}
    \end{subfigure}
    \hfill
    \begin{subfigure}[b]{0.48\textwidth}
         \centering
         \includegraphics[height=150pt]{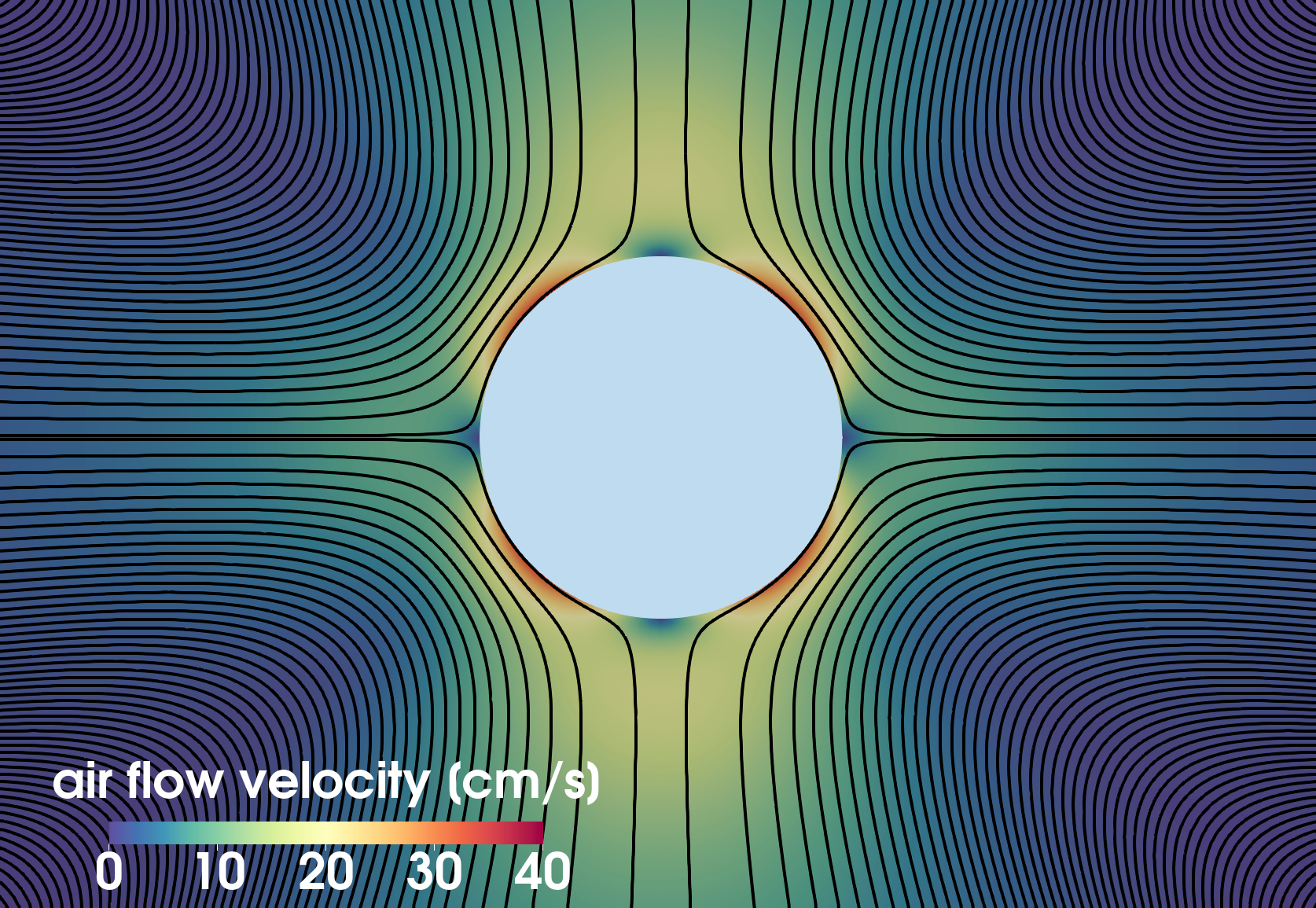}
         \caption{Acoustic streaming at $\text{SPL} = 164\,$dB.}
         \label{fig:acoustic_streaming}
    \end{subfigure}
    \caption{Air flows around the $1\,\mu$l droplet considered in our numerical examples.}
    \label{fig:air_flows}
\end{figure}
illustrates the Stokes flow around the droplet for $V_\infty = 40\,$cm/s. The convective impact of the Stokes flow on heat and vapor mass distributions is visualized in Figure \ref{fig:heat_and_mass_transfer_Stokes_flow}.
\begin{figure}
    \centering
    \begin{subfigure}[b]{0.48\textwidth}
         \centering
         \includegraphics[height=140pt]{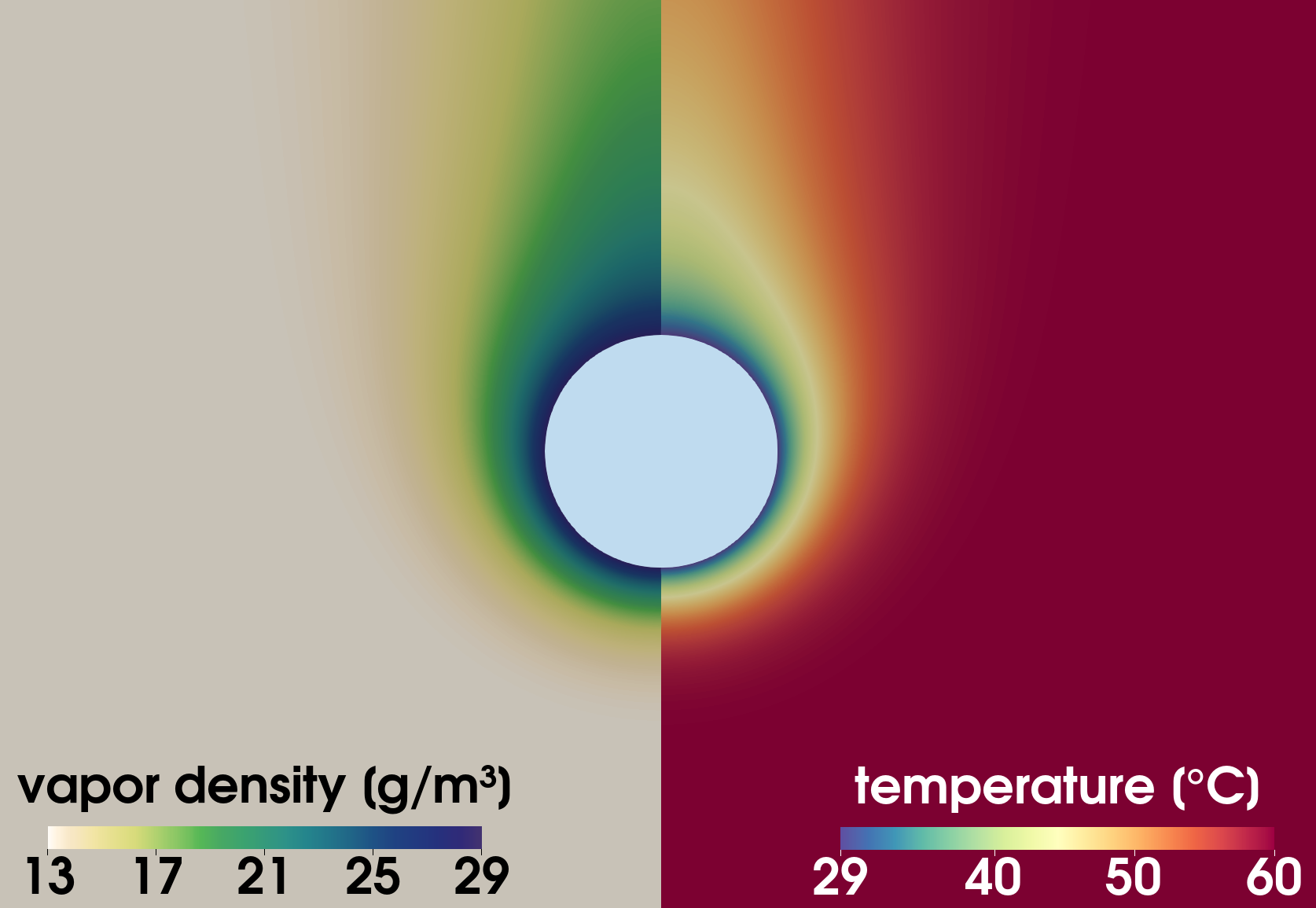}
         \caption{Heat and vapor mass transport at $V_\infty = 40\,$cm/s.}
         \label{fig:heat_and_mass_transfer_Stokes_flow}
    \end{subfigure}
    \hfill
    \begin{subfigure}[b]{0.48\textwidth}
         \centering
         \includegraphics[height=130pt]{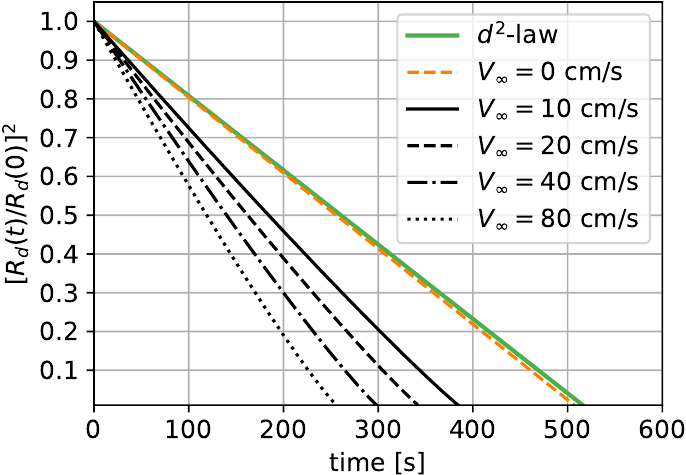}
         \caption{Evolution of the normed squared droplet radius.}
         \label{fig:droplet_radii_Stokes_flow}
    \end{subfigure}
    \caption{Convective impact of the Stokes flow on the evaporation process of a $1\,\mu$l water droplet under the drying conditions $T_\infty = 60\,$\textdegree C and $\text{RH}_\infty = 10\,$\%.}
    \label{fig:evaporation_Stokes_flow}
\end{figure}
One can clearly see that the vapor evaporating from the droplet is blown away in vertical direction. Likewise, the air above the droplet has a lower temperature than the one below or around its equator. Therefore, the evaporation rate increases from the north towards the south pole of the droplet where the normal heat flux assumes its maximum. Recall from Section \ref{sec1} that the evaporation of a spherical droplet into stagnant air is well described by the $d^2$-law. According to \cite{doss22, doss23}, the latter is given by
\begin{equation} \label{d2_law}
\frac{d R_d^2}{dt} = - \frac{2 k_g (T_{\infty} - T_d)}{\rho_w \Lambda_w}
\end{equation}
together with the following implicit equation
\begin{equation} \label{Td_estimate}
    \frac{\rho_\textit{sat}(T_d) - \rho_\infty}{T_\infty - T_d} = \frac{k_g}{D_v \Lambda_w}
\end{equation}
for the uniform temperature $T_d$ of the droplet. Figure \ref{fig:droplet_radii_Stokes_flow} compares the evolution of the normed squared droplet radius obtained from the $d^2$-law and our direct numerical simulations. If the latter are performed without convection, both curves coincide almost exactly. Notice that the simulated droplet evaporates slightly faster than predicted by the $d^2$-law. This is due to the truncation of the gas phase at the rescaled distance $r = 50$ from the droplet center. The Stokes flow, on the other hand, makes the droplet evaporate significantly faster. Depending on the ambient gas velocity, the lifetime of the droplet is reduced by up to $50\,$\%. Furthermore, the normed squared droplet radii are no longer linear but slightly convex. Therefore, the evaporation rate decreases as the droplet becomes smaller.

\subsection{Acoustic streaming}

Our second numerical example addresses the convection inside an acoustic levitator and requires a brief introduction to understand its physical background. Acoustic leviation is a very elegant way to perform single droplet drying experiments \cite{schiffter07}. Avoiding physical contact, a standing ultrasound wave is used to immobilize a single droplet in one of its sound pressure nodes. More precisely, the so-called acoustic radiation pressure counteracts gravity \cite{andrade18}. However, the rapid attenuation of the sound wave at the droplet surface also causes fluid circulations inside the viscous boundary layer. This microscopic effect induces a steady air flow in the ambient gas phase which is commonly known as outer acoustic streaming \cite{yarin99}. According to \cite{lee90}, its polar and radial components are given by
\begin{equation} \label{outer_acoustic_streaming}
\begin{aligned}
    v_\theta &= - \frac{45 A^2}{32 \omega R_d \rho_g^2 c_0^2} \frac{1}{r^4} \sin 2\theta, \\
    v_r &= \frac{45 A^2}{32 \omega R_d \rho_g^2 c_0^2} \bigg( \frac{1}{r^2} - \frac{1}{r^4} \bigg) ( 3 \cos^2 \theta - 1 )
\end{aligned}
\end{equation}
where $A$ denotes the sound pressure amplitude, $\omega$ the angular sound frequency, and $c_0$ the speed of sound in air. It should be pointed out that \eqref{outer_acoustic_streaming} only applies to a spherical droplet whose center lies in one of the sound pressure nodes generated by a planar acoustic levitation wave. The sound pressure amplitude~$A$ is usually expressed in terms of the corresponding sound pressure level (SPL) given by
\begin{equation} \label{SPL}
	\text{SPL} = \left[ 20 \cdot \log_{10} (A/\text{Pa}) + 94 \right] \, \text{dB}
\end{equation}
in decibel \cite{yarin99}. Unlike the Stokes flow \eqref{Stokes_flow} considered in Section \ref{sec_Stokes_flow}, the acoustic streaming now actually depends on the droplet radius. The Lipschitz condition \eqref{air_flow_continuity} can be easily verified.\\
Figure \ref{fig:acoustic_streaming} illustrates the acoustic streaming around a $1\,\mu$l droplet levitated at $\text{SPL} = 164\,$dB. The corresponding heat and vapor mass distributions in the ambient gas phase are visualized in Figure \ref{fig:heat_and_mass_transfer_acoustic_streaming}.
\begin{figure}
    \centering
    \begin{subfigure}[b]{0.48\textwidth}
         \centering
         \includegraphics[height=140pt]{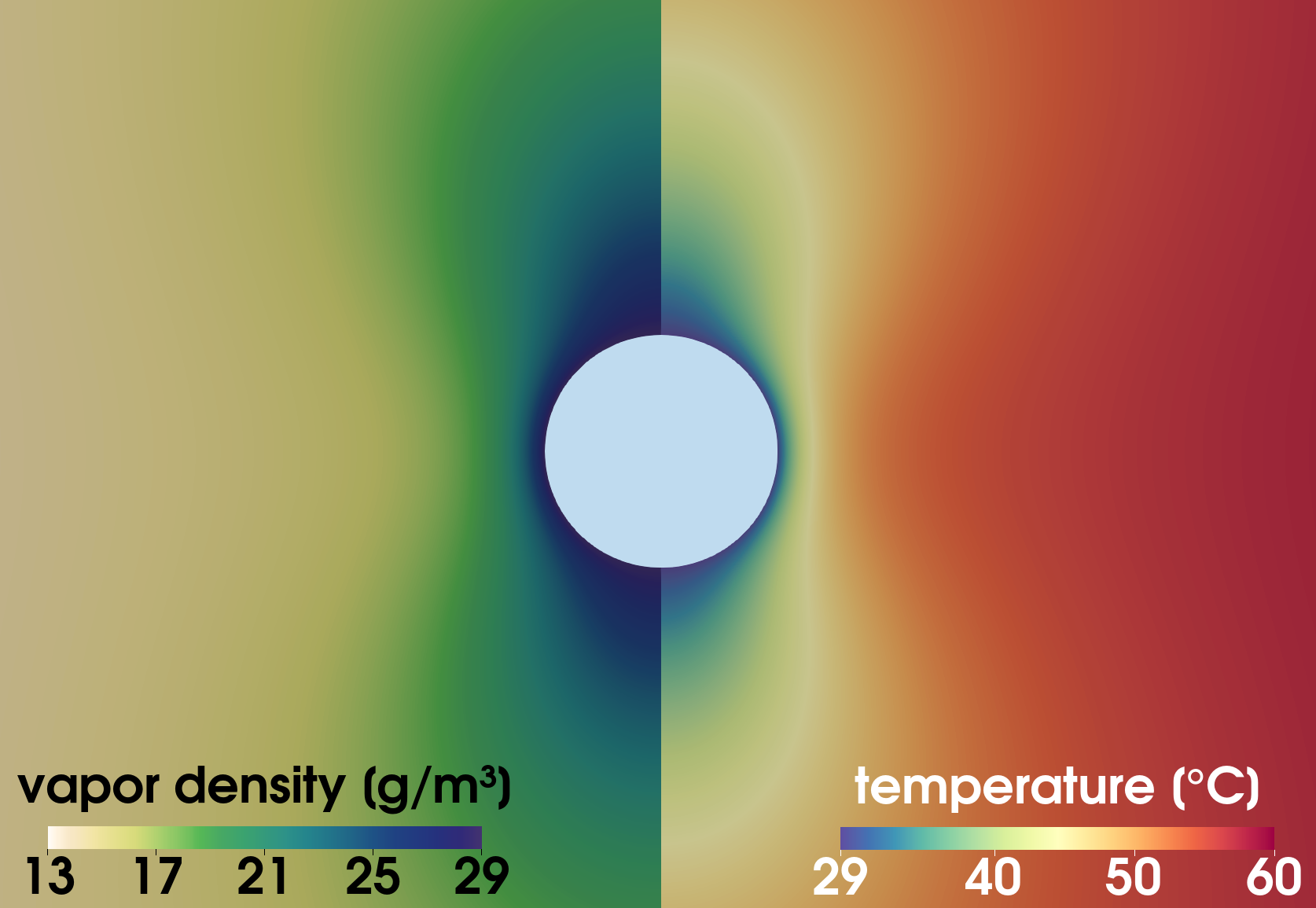}
         \caption{Heat and vapor mass transport at $\text{SPL} = 164\,$dB.}
         \label{fig:heat_and_mass_transfer_acoustic_streaming}
    \end{subfigure}
    \hfill
    \begin{subfigure}[b]{0.48\textwidth}
         \centering
         \includegraphics[height=130pt]{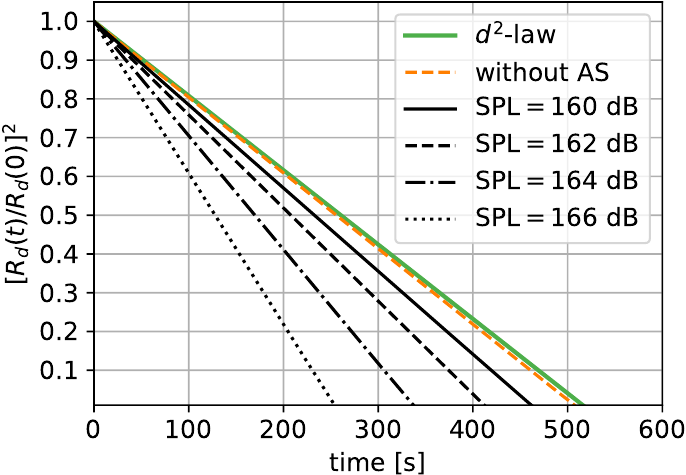}
         \caption{Evolution of the normed squared droplet radius.}
         \label{fig:droplet_radii_acoustic_streaming}
    \end{subfigure}
    \caption{Convective impact of the acoustic streaming (AS) on the evaporation process of a $1\,\mu$l water droplet under the drying conditions $T_\infty = 60\,$\textdegree C and $\text{RH}_\infty = 10\,$\%.}
    \label{fig:evaporation_acoustic_streaming}
\end{figure}
It can be observed that the vapor evaporating from the droplet is blown away from its equator towards its vertical poles. Likewise, the air above and below the droplet has a lower temperature than the one around its equator. As argued in \cite{doss23}, the evaporation rate increases with the normal heat flux from the vertical poles towards the equator of the levitated droplet. Figure \ref{fig:droplet_radii_acoustic_streaming} compares the evolution of the normed squared droplet radius obtained from the $d^2$-law and our direct numerical simulations. The small deviation of our simulated curve without the acoustic streaming from the $d^2$-law was already discussed before. Like the Stokes flow, also the acoustic streaming accelerates the evaporation process of the levitated droplet. Especially at higher sound pressure levels, the lifetime of the droplet is reduced significantly. As a concluding remark, we mention that the droplet levitated at $\text{SPL} = 166\,$dB evaporates approximately as fast as the one exposed to the Stokes flow with $V_\infty = 80\,$cm/s. 

\section{Conclusion} \label{sec_conclusion}

We successfully applied Banach's fixed-point theorem to study the well-posedness of a coupled ODE--PDE system describing the convective evaporation of a spherical droplet. The decoupled problem (for given droplet radius) was shown to admit a unique weak solution using the method of upper and lower solutions. To address the fully coupled ODE--PDE system, we reformulated it as a fixed-point problem. The underlying Volterra operator for the droplet radius was shown to be a contraction for short time intervals. The unique existence of a maximal weak solution finally followed from a topological argument. Apart from these analytical results, we applied the finite element method to perform direct numerical simulations. The $d^2$-law was used to validate our numerical results. We visualized the temperature and vapor mass distributions to investigate the convective impact of two different air flows (Stokes flow and acoustic streaming) around the droplet. Both turned out to accelerate its evaporation process.\\
Recall that our single droplet evaporation model is limited to several simplifications. Instead of assuming the droplet to be isothermal, it would be physically more accurate to compute its temperature from an additional heat equation. Also the shape of the droplet could be derived from a balance of forces including surface tension to account for possible deformations. Finally, the air flow in the ambient gas phase should be modeled by the Navier--Stokes equations. Extending our model in the aforementioned ways leads to new mathematical challenges which are to be addressed in the future.

\section*{Appendix}

In the following, we derive the upper bound \eqref{defect_estimate} for the sum $d = 2 (d_T + d_\rho)$ of the defects \eqref{defect_T} and \eqref{defect_rho} containing all products with $\delta R$, $\delta \dot{R}$, or $\delta v$ that appear in the proof of Proposition \ref{H1_stability}. 
\begin{lemma*}
    The total defect $d$ defined in the proof of Proposition \ref{H1_stability} is bounded by
    \begin{equation} \label{total_defect_bound}
        |d| \leq \varepsilon \big( \| \nabla \delta T \|_{2, \Omega}^2 + \| \nabla \delta \rho \|_{2, \Omega}^2 \big) + C \bigg[ \big( 1 + \| \nabla T_2 \|_{2, \Omega}^2 + \| \nabla \rho_2 \|_{2, \Omega}^2 \big) \delta R^2 + \frac{1}{\varepsilon} \big( \delta \dot{R}^2 + \| \delta T \|_{2, \Omega}^2 + \| \delta \rho \|_{2, \Omega}^2 \big) \bigg] \tag{$\ast$}
    \end{equation}
    with $\varepsilon > 0$ being arbitrarily small.
\end{lemma*}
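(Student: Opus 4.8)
The plan is to expand $d = 2(d_T + d_\rho)$ and to bound every individual integral appearing in the defects \eqref{defect_T} and \eqref{defect_rho} by Hölder's and Young's inequalities, supplemented by the trace interpolation inequality of Lemma \ref{Lemma:1} for the boundary contributions. Since $d_\rho$ has exactly the same structure as $d_T$ with $T_2, \delta T, \nabla T_2$ replaced by $\rho_2, \delta\rho, \nabla\rho_2$, it suffices to treat $d_T$ and append the analogous bound for $d_\rho$ at the end. Throughout I would exploit the following uniform bounds, all of which are available: because $R_1, R_2 \in \Sigma_{t_*}$ are bounded away from zero by $\eta$ and from above (as $|\dot R_i| \leq J_\infty$ forces $R_i \leq R_0 + t_* J_\infty$), every rational prefactor such as $1/(R_1 R_2)$ and $(R_1+R_2)/(R_1 R_2)$ is bounded by $C$; the domain is bounded, so $|x| \leq C$; $|\dot R_2| \leq J_\infty$; and assumption (A6) yields $\|v_2\|_{L^\infty(\Omega)} \leq C$ together with $\|\delta v\|_{L^\infty(\Omega)} \leq L_\eta |\delta R|$. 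Crucially, Proposition \ref{existence_Rd_given} supplies the pointwise bounds $T_* \leq T_2 \leq T_\infty$ and $\rho_\infty \leq \rho_2 \leq \rho_*$, whence $\|J_2\|_{2,\Gamma} \leq C$ and $\|T_2\|_{2,\Gamma} \leq C$; these are what render the boundary terms and the integration by parts below harmless. Note that $\delta R$ and $\delta\dot R$ are scalar functions of $t$ alone and may be pulled out of every spatial integral.

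For the routine pieces I would proceed term by term. The three parts of the volume integral in \eqref{defect_T} proportional to $\delta R$ — those carrying $\delta R v_2$, $-R_2\delta v$, and $-\delta R\dot R_2 x$ — all reduce, after the above bounds, to $C|\delta R|\,\|\nabla T_2\|_{2,\Omega}\,\|\delta T\|_{2,\Omega}$, and Young's inequality applied to the splitting $(|\delta R|\,\|\nabla T_2\|_{2,\Omega})\cdot\|\delta T\|_{2,\Omega}$ produces $C\,\delta R^2\|\nabla T_2\|_{2,\Omega}^2 + C\,\|\delta T\|_{2,\Omega}^2$, both of which already appear in \eqref{total_defect_bound}. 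The diffusive term $\delta R\,\tfrac{R_1+R_2}{R_1R_2}\int_\Omega\nabla T_2\cdot\nabla\delta T\,dx$ is bounded by $C|\delta R|\,\|\nabla T_2\|_{2,\Omega}\,\|\nabla\delta T\|_{2,\Omega}$, and Young's inequality splits it into the absorbable $\varepsilon\|\nabla\delta T\|_{2,\Omega}^2$ plus a constant multiple of $\delta R^2\|\nabla T_2\|_{2,\Omega}^2$. The boundary term $\tfrac{\delta R}{R_1R_2}\int_\Gamma J_2\delta T\,d\sigma$ is estimated by $C|\delta R|\,\|\delta T\|_{2,\Gamma}$ using $\|J_2\|_{2,\Gamma}\leq C$; one application of Young and then of Lemma \ref{Lemma:1} to $\|\delta T\|_{2,\Gamma}^2$ turns it into $C\,\delta R^2 + \varepsilon\|\nabla\delta T\|_{2,\Omega}^2 + \tfrac{C}{\varepsilon}\|\delta T\|_{2,\Omega}^2$.

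The one genuinely delicate contribution is the part of the volume integral carrying $\delta\dot R R_2 x$, which equals $\tfrac{\delta\dot R}{R_1}\int_\Omega(\nabla T_2\cdot x)\delta T\,dx$. A naive Hölder bound yields the triple product $|\delta\dot R|\,\|\nabla T_2\|_{2,\Omega}\,\|\delta T\|_{2,\Omega}$, which cannot be split to match \eqref{total_defect_bound}: any Young splitting creates either $\varepsilon\|\nabla T_2\|_{2,\Omega}^2$ or $\delta\dot R^2\|\nabla T_2\|_{2,\Omega}^2$, and the latter is not integrable in time since only $\delta\dot R,\,\|\nabla T_2\|_{2,\Omega} \in L^2(I)$ are known. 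I expect this to be the main obstacle, and I would resolve it by integrating by parts to move the gradient off $T_2$. Using $\nabla\cdot x = 3$, the divergence theorem, $x\cdot n = -1$ on $\Gamma$, and $\delta T = 0$ on $\Gamma_\infty$, one obtains
\begin{equation*}
    \int_\Omega(\nabla T_2\cdot x)\,\delta T\,dx = -\int_\Gamma T_2\,\delta T\,d\sigma - \int_\Omega T_2\,(\nabla\delta T\cdot x)\,dx - 3\int_\Omega T_2\,\delta T\,dx .
\end{equation*}
Now the derivative sits on $\delta T$: the middle integral is $\leq C|\delta\dot R|\,\|\nabla\delta T\|_{2,\Omega}$ (Young gives $\varepsilon\|\nabla\delta T\|_{2,\Omega}^2 + \tfrac{C}{\varepsilon}\delta\dot R^2$), the last integral is $\leq C|\delta\dot R|\,\|\delta T\|_{2,\Omega}$ (Young gives $\tfrac{C}{\varepsilon}(\delta\dot R^2 + \|\delta T\|_{2,\Omega}^2)$), and the boundary integral is controlled by $C|\delta\dot R|\,\|\delta T\|_{2,\Gamma}$ via $\|T_2\|_{2,\Gamma}\leq C$, followed by Young and Lemma \ref{Lemma:1}. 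Every resulting contribution now matches a term of \eqref{total_defect_bound}.

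Finally I would collect all estimates for $d_T$, add the identical bounds for $d_\rho$ (which supply the $\|\nabla\delta\rho\|_{2,\Omega}^2$, $\|\nabla\rho_2\|_{2,\Omega}^2\delta R^2$, and $\|\delta\rho\|_{2,\Omega}^2$ terms), and sum as $d = 2(d_T + d_\rho)$. After relabelling the generic constant $C$ and choosing the several small Young parameters proportional to $\varepsilon$ so that all gradient contributions carry the common coefficient $\varepsilon$, the claimed bound \eqref{total_defect_bound} follows. The only non-mechanical input is the integration by parts in the $\delta\dot R$ term; the boundedness of $T_2$ and $\rho_2$ from Proposition \ref{existence_Rd_given} is precisely what guarantees both its validity and the harmlessness of the boundary contributions it generates.
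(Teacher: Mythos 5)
Your proposal is correct and follows essentially the same route as the paper's own proof: the identical term-by-term H\"older/Young estimates with Lemma \ref{Lemma:1} for the boundary integrals, and the identical integration by parts (via $\nabla T_2\cdot x=\nabla\cdot(T_2x)-3T_2$ and $x\cdot n=-1$ on $\Gamma$) to defuse the $\delta\dot R$ term. Your explicit explanation of \emph{why} the naive H\"older bound on that term fails (non-integrability of $\delta\dot R^2\,\|\nabla T_2\|_{2,\Omega}^2$ in time) is a welcome addition that the paper leaves implicit, but it does not change the argument.
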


\begin{proof}
    Recall $|\delta T| \leq T_\infty - T_*$, $|\delta \rho| \leq \rho_* - \rho_\infty$, and $|J_2| \leq J_\infty$ due to the lower and upper bounds \eqref{lower_and_upper_bounds} from Proposition \ref{existence_Rd_given}. Assumption (A6) further implies $\| \delta v \|_{\infty, \Omega} \leq C |\delta R|$ and $\| v_2 \|_{\infty, \Omega} \leq C$. Therefore, the individual terms of the temperature defect \eqref{defect_T} can be handled as follows
    \begin{gather}
        \int_{\Omega} \delta R (\nabla T_2 \cdot v_2) \delta T \, dx \leq C |\delta R| \| \nabla T_2 \|_{2, \Omega} \| \delta T \|_{2, \Omega} \leq C \left( \delta R^2 \| \nabla T_2 \|_{2, \Omega}^2 + \| \delta T \|_{2, \Omega}^2 \right), \notag\\
        - \int_{\Omega} R_2 (\nabla T_2 \cdot \delta v) \delta T \, dx \leq C |\delta R| \| \nabla T_2 \|_{2, \Omega} \| \delta T \|_{2, \Omega} \leq C \left( \delta R^2 \| \nabla T_2 \|_{2, \Omega}^2 + \| \delta T \|_{2, \Omega}^2 \right), \notag\\
        - \int_{\Omega} \delta R \dot{R}_2 (\nabla T_2 \cdot x) \delta T \, dx \leq C |\delta R| \| \nabla T_2 \|_{2, \Omega} \| \delta T \|_{2, \Omega} \leq C \left( \delta R^2 \| \nabla T_2 \|_{2, \Omega}^2 + \| \delta T \|_{2, \Omega}^2 \right), \notag\\
        \int_{\Gamma} \delta R J_2 \delta T \, d\sigma \leq C \left( \delta R^2 + \| \delta T \|_{2, \Gamma}^2 \right) \leq \varepsilon \| \nabla \delta T \|_{2, \Omega}^2 + C \left( \delta R^2 + \frac{1}{\varepsilon} \| \delta T \|_{2, \Omega}^2 \right), \label{boundary_defect_estimate} \tag{$\ast$$\ast$} \\
        \int_{\Omega} \delta R \nabla T_2 \cdot \nabla \delta T \, dx \leq |\delta R| \| \nabla T_2 \|_{2, \Omega} \| \nabla \delta T \|_{2, \Omega} \leq \varepsilon \| \nabla \delta T \|_{2, \Omega}^2 + \frac{C}{\varepsilon} \delta R^2 \| \nabla T_2 \|_{2, \Omega}^2 \notag
    \end{gather}
    using H\"older's and Young's inequalities. Notice that the last inequality in \eqref{boundary_defect_estimate} follows from Lemma \ref{Lemma:1}. The temperature defect term involving $\delta \dot{R}$ is somewhat more delicate and thus treated separately. First of all, we apply the divergence theorem to compute
    \begin{equation*}
        - \int_\Omega R_2 \delta \dot{R} (\nabla T_2 \cdot x) \delta T \, dx = 3 \int_\Omega R_2 \delta \dot{R} T_2 \delta T \, dx + \int_\Gamma R_2 \delta \dot{R} T_2 \delta T \, d\sigma + \int_\Omega R_2 \delta \dot{R} T_2 (\nabla \delta T \cdot x) \, dx
    \end{equation*}
    taking into account $\nabla T_2 \cdot x = \nabla \cdot (T_2 x) - 3 T_2$ in $\Omega$ and $x \cdot n = -1$ on $\Gamma$. The estimate
    \begin{equation*}
        - \int_\Omega R_2 \delta \dot{R} (\nabla T_2 \cdot x) \delta T \, dx \leq \varepsilon \| \nabla \delta T \|_{2, \Omega}^2 + \frac{C}{\varepsilon} \big( \delta \dot{R}^2 + \| \delta T \|_{2, \Omega}^2 \big)
    \end{equation*}
    then follows from Young's inequality and Lemma \ref{Lemma:1}. Altogether, we obtain the upper bound
    \begin{equation*}
        |d_T| \leq \varepsilon \| \nabla \delta T \|_{2, \Omega}^2 + C \bigg[ \big( 1 + \| \nabla T_2 \|_{2, \Omega}^2 \big) \delta R^2 + \frac{1}{\varepsilon} \big( \delta \dot{R}^2 + \| \delta T \|_{2, \Omega}^2 \big) \bigg]
    \end{equation*}
    for the temperature defect. Likewise, we obtain a similar upper bound for the vapor mass defect
    \begin{equation*}
        |d_\rho| \leq \varepsilon \| \nabla \delta \rho \|_{2, \Omega}^2 + C \bigg[ \big( 1 + \| \nabla \rho_2 \|_{2, \Omega}^2 \big) \delta R^2 + \frac{1}{\varepsilon} \big( \delta \dot{R}^2 + \| \delta \rho \|_{2, \Omega}^2 \big) \bigg]
    \end{equation*}
    using the same arguments as before. The upper bound \eqref{total_defect_bound} for the total defect $d = 2 (d_T + d_\rho)$ then follows from the previous estimates for $d_T$ and $d_\rho$ via summation.
\end{proof}

\section*{Funding}

This work was supported by the German Research Foundation (DFG) via Research Training Group 2339 Interfaces, Complex Structures, and Singular Limits.

\section*{Nomenclature}

\begin{longtable}[l]{ll}
    $\Omega$ & gas phase \hfill\\
    $\Gamma$ & droplet surface\\
    $\Gamma_\infty$ & outer boundary of the gas phase\\
    $R$ & droplet radius\\
    $T$ & gas temperature\\
    $T_\infty$ & gas temperature of the drying air\\
    $\rho$ & vapor mass density\\
    $\rho_\infty$ & vapor mass density of the drying air\\
    $\rho_\textit{sat}$ & saturated vapor mass density\\
    $J$ & evaporation rate\\
    $v$ & air flow velocity
\end{longtable}

\def\cprime{$'$} \def\cprime{$'$} \def\cprime{$'$} \def\cprime{$'$}
  \def\cprime{$'$} \def\cprime{$'$} \def\cprime{$'$} \def\cprime{$'$}

\bibliography{literatur}

\begin{thebibliography}{10}

\bibitem{adams75}
R.A. Adams.
\newblock {\em Sobolev spaces}.
\newblock Pure and Applied Mathematics. Academic Press, New York, 1975.

\bibitem{anderson07}
D.~Anderson, P.~Cermelli, E.~Fried, M.~Gurtin, and G.~McFadden.
\newblock General dynamical sharp-interface conditions for phase
  transformations in viscous heat-conducting fluids.
\newblock {\em Journal of Fluid Mechanics}, 581, 06 2007.

\bibitem{andrade18}
M.A.B. Andrade, N.~P{\'e}rez, and J.C. Adamowski.
\newblock Review of progress in acoustic levitation.
\newblock {\em Brazilian Journal of Physics}, 48:190--213, 2018.

\bibitem{baensch18}
E.~B\"ansch and M.~G\"otz.
\newblock Numerical study of droplet evaporation in an acoustic levitator.
\newblock {\em Physics of Fluids}, 30(3):037103, 2018.

\bibitem{bastian21}
P.~Bastian, M.~Blatt, A.~Dedner, N.-A. Dreier, C.~Engwer, R.~Fritze,
  C.~Gr{\"a}ser, C.~Gr{\"u}ninger, D.~Kempf, R.~Kl{\"o}fkorn, M.~Ohlberger, and
  O.~Sander.
\newblock The {Dune} framework: Basic concepts and recent developments.
\newblock {\em Computers \& Mathematics with Applications}, 81:75--112, 2021.

\bibitem{davis04}
T.~A. Davis.
\newblock Algorithm 832: Umfpack v4.3---an unsymmetric-pattern multifrontal
  method.
\newblock {\em ACM Transactions on Mathematical Software}, 30(2):196--199,
  2004.

\bibitem{doss22}
M.~Do{\ss} and E.~B\"ansch.
\newblock Numerical study of single droplet drying in an acoustic levitator
  before the critical point of time.
\newblock {\em Chemical Engineering Science}, 248:117149, 2022.

\bibitem{doss23}
M.~Do{\ss}, N.~Ray, and E.~B{\"a}nsch.
\newblock Modeling and simulation of single droplet drying in an acoustic
  levitator.
\newblock {\em Drying Technology}, 41(13):2088--2104, 2023.

\bibitem{engwer17}
C.~Engwer, C.~Gr{\"a}ser, S.~M{\"u}thing, and O.~Sander.
\newblock The interface for functions in the dune-functions module.
\newblock {\em Archive of Numerical Software}, 5(1):95--109, 2017.

\bibitem{engwer18}
C.~Engwer, C.~Gr{\"a}ser, S.~M{\"u}thing, and O.~Sander.
\newblock Function space bases in the dune-functions module.
\newblock {\em arXiv preprint}, 1806.09545, 2018.

\bibitem{escher03}
J.~Escher, J.~Pruess, and G.~Simonett.
\newblock Analytic solutions for a {Stefan} problem with {Gibbs}-{Thomson}
  correction.
\newblock {\em Journal Fur Die Reine Und Angewandte Mathematik}, 2003:1--52, 01
  2003.

\bibitem{evans10}
L.C. Evans.
\newblock {\em Partial differential equations}.
\newblock American Mathematical Society, Providence, R.I., 2010.

\bibitem{friedman60}
A.~Friedman.
\newblock Free boundary problems for parabolic equations ii. evaporation or
  condensation of a liquid drop.
\newblock {\em Journal of Mathematics and Mechanics}, 9(1):19--66, 1960.

\bibitem{friedman76}
A.~Friedman.
\newblock Analyticity of the free boundary for the {Stefan} problem.
\newblock {\em Archive for Rational Mechanics and Analysis}, 61(2):97--125,
  1976.

\bibitem{gahn23}
M.~Gahn and I.S. Pop.
\newblock Homogenization of a mineral dissolution and precipitation model
  involving free boundaries at the micro scale.
\newblock {\em Journal of Differential Equations}, 343:90--151, 2023.

\bibitem{gaerttner23}
S.~G{\"a}rttner, P.~Knabner, and N.~Ray.
\newblock Local existence of strong solutions to micro-macro models for
  reactive transport in evolving porous media.
\newblock {\em European Journal of Applied Mathematics}, pages 1--28, 2023.

\bibitem{herz14}
M.~Herz.
\newblock {\em Mathematical modeling and analysis of electrolyte solutions}.
\newblock Ph{D} thesis, Friedrich-Alexander-Universit{\"a}t
  Erlangen-N{\"u}rnberg, 2014.

\bibitem{holyst15}
R.~Ho\l{}yst, M.~Litniewski, and D.~Jakubczyk.
\newblock A molecular dynamics test of the {Hertz}-{Knudsen} equation for
  evaporating liquids.
\newblock {\em Soft Matter}, 11:7201--7206, 2015.

\bibitem{lady68}
O.A. Lady\v{z}enskaja.
\newblock {\em Linear and quasi-linear equations of parabolic type}.
\newblock Translations of mathematical monographs 23. American Mathematical
  Soc., Providence, R.I, 1968.

\bibitem{landau87}
L.~D. Landau and E.~M. Lifshitz.
\newblock {\em Fluid Mechanics}, volume~6 of {\em Course of Theoretical
  Physics}.
\newblock Pergamon Press, 2nd edition, 1987.

\bibitem{law06}
C.K. Law.
\newblock {\em Combustion Physics}.
\newblock Cambridge University Press, 2006.

\bibitem{lee90}
C.P. Lee and T.G. Wang.
\newblock Outer acoustic streaming.
\newblock {\em The Journal of the Acoustical Society of America},
  88(5):2367--2375, 11 1990.

\bibitem{murray67}
F.W. Murray.
\newblock On the computation of saturation vapor pressure.
\newblock {\em Journal of Applied Meteorology and Climatology}, 6(1):203 --
  204, 1967.

\bibitem{pao92}
C.V. Pao.
\newblock {\em Nonlinear Parabolic and Elliptic Equations}.
\newblock Springer US, Boston, MA, 1992.

\bibitem{pruess12}
J.~Pruess and S.~Shimizu.
\newblock On well-posedness of incompressible two-phase flows with phase
  transitions: the case of non-equal densities.
\newblock {\em Journal of Evolution Equations}, 12:917--941, 09 2012.

\bibitem{pruess11}
J.~Pruess, S.~Shimizu, Y.~Shibata, and G.~Simonett.
\newblock On well-posedness of incompressible two-phase flows with phase
  transitions: The case of equal densities.
\newblock {\em Journal of Evolution Equations}, 12, 09 2011.

\bibitem{rubinstein71}
L.I. Rubinstein.
\newblock {\em The Stefan Problem}, volume~27.
\newblock American Mathematical Society, 1971.

\bibitem{sander20}
O.~Sander.
\newblock {\em DUNE --- The Distributed and Unified Numerics Environment}.
\newblock Lecture Notes in Computational Science and Engineering. Springer
  International Publishing, 2020.

\bibitem{schiffter07}
H.~Schiffter and G.~Lee.
\newblock Single-droplet evaporation kinetics and particle formation in an
  acoustic levitator. part 1: Evaporation of water microdroplets assessed using
  boundary-layer and acoustic levitation theories.
\newblock {\em Journal of Pharmaceutical Sciences}, 96(9):2274--2283, 2007.

\bibitem{semenov11}
S.~Semenov, V.~Starov, M.G. Velarde, and R.~Rubio.
\newblock Droplets evaporation: Problems and solutions.
\newblock {\em The European Physical Journal Special Topics}, 197:265--278, 08
  2011.

\bibitem{vehring08}
R.~Vehring.
\newblock Pharmaceutical particle engineering via spray drying.
\newblock {\em Pharmaceutical research}, 25:999--1022, 06 2008.

\bibitem{visintin08}
A.~Visintin.
\newblock Introduction to {S}tefan-type problems.
\newblock {\em Handbook of differential equations: evolutionary equations},
  4:377--484, 2008.

\bibitem{wang12}
K.~Wang and R.E. Dickinson.
\newblock A review of global terrestrial evapotranspiration: Observation,
  modeling, climatology, and climatic variability.
\newblock {\em Reviews of Geophysics}, 50(2):RG2005, 2012.

\bibitem{yarin99}
A.L. Yarin, G.~Brenn, O.~Kastner, D.~Rensink, and C.~Tropea.
\newblock Evaporation of acoustically levitated droplets.
\newblock {\em Journal of Fluid Mechanics}, 399:151--204, 1999.

\bibitem{zientara13}
M.~Zientara, D.~Jakubczyk, M.~Litniewski, and R.~Ho\l{}yst.
\newblock Transport of mass at the nanoscale during evaporation of droplets:
  the {Hertz}-{Knudsen} equation at the nanoscale.
\newblock {\em The Journal of Physical Chemistry C}, 117(2):1146--1150, 2013.

\end{thebibliography}
\bibliographystyle{plain}

%\vfill

%{\small
%\begin{minipage}[t]{0.45\textwidth}
%\noindent
%Eberhard B\"ansch\\
%Applied Mathematics III\\
%University Erlangen--N\"urnberg\\
%Cauerstr. 11\\
%91058 Erlangen\\
%Germany
%{\tt baensch@math.fau.de}
%\end{minipage}\hfill
%}

\end{document}